\newcommand{\Q}{\mathbb{Q}}
\newcommand{\C}{\mathbb{C}}
\newcommand{\N}{\mathbb{N}}
\newcommand{\PP}{\mathbb{P}}
\newcommand{\Z}{\mathbb{Z}}
\newcommand{\BB}{\mathcal{B}}
\newcommand{\DD}{\mathcal{D}}
\newcommand{\cS}{\mathcal{S}}
\newcommand{\A}{\mathcal{A}}
\newcommand{\TT}{\mathcal{T}}
\newcommand{\QQ}{\mathcal{Q}}
\newcommand{\m}{\mathfrak{m}}
\newcommand{\bbS}{\mathbb S}
\newcommand{\cL}{\mathcal{L}}
\newcommand{\cO}{\mathcal{O}}
\newcommand{\cR}{\mathcal{R}}
\newcommand{\Gr}{\mathsf{Gr}}
\newcommand{\gr}{\mathsf{gr}}
\newcommand{\qgr}{\mathsf{qgr}}
\newcommand{\bimod}[3]{\tensor[_{#1}]{#2}{_{#3}}}
\newcommand{\ld}[1]{\tensor[^{\ast}]{#1}{}}
\newcommand{\rd}[1]{\tensor[]{#1}{^{\ast}}}
\newcommand{\coh}{\mathsf{coh}}
\newcommand{\fl}{\mathsf{fl}}
\newcommand{\Ab}{\mathsf{Ab}}
\newcommand{\diag}[1]{{#1}_{\Delta}}
\newcommand{\del}{\partial}
\DeclareMathOperator{\Aut}{Aut}
\DeclareMathOperator{\Hom}{Hom}
\DeclareMathOperator{\coker}{coker}
\newtheorem{theorem}{Theorem}[section]
\newtheorem{lemma}[theorem]{Lemma}
\newtheorem{propn}[theorem]{Proposition}
\newtheorem{cor}[theorem]{Corollary}
\theoremstyle{definition}
\newtheorem{defn}[theorem]{Definition}
\newtheorem{ex}[theorem]{Example}
\newtheorem{remark}[theorem]{Remark}
\numberwithin{equation}{section}
\begin{document}

\title[Complements of pt. schemes of noncommutative projective lines]{Complements of the point schemes of noncommutative projective lines}

\author{Jackson Ryder}

\address{Department of Mathematics and Statistics, University of New South Wales}
\email{jackson.ryder@unsw.edu.au}

\thanks{This work was supported by an Australian Government Research Training Program (RTP) Scholarship.}
\subjclass[2020]{16S38, 14A22}

\begin{abstract}
    Recently, Chan and Nyman constructed noncommutative projective lines via a noncommutative symmetric algebra for a bimodule $V$ over a pair of fields. These noncommutative projective lines contain a canonical closed subscheme (the point scheme) determined by a normal family of elements in the noncommutative symmetric algebra. We study the complement of this subscheme when $V$ is simple, the coordinate ring of which is obtained by inverting said normal family. We show that this localised ring is a noncommutative Dedekind domain of Gelfand-Kirillov dimension 1. Furthermore, the question of simplicity of these Dedekind domains is answered by a similar dichotomy for an analogous open subscheme of the noncommutative quadrics of Artin, Tate and Van den Bergh.
\end{abstract}

\maketitle

\section{Introduction}
The projective line $\PP^1(k)$ over a field $k$ may be defined in a coordinate-free manner as the projectivisation of the symmetric algebra $\mathrm{Proj}(\bbS(W))$ of a 2-dimensional $k$-vector space $W$. In \cite{chan_species_2016, chan_representation_2019}, Chan and Nyman construct noncommutative projective lines via an analogous construction for \textit{two-sided vector spaces}: bimodules $V$ over a pair of division rings $D_0$ and $D_1$. An analogue of the symmetric algebra of $V$ is constructed via Van den Bergh's noncommutative symmetric algebra \cite{van_den_bergh_non-commutative_2012}. When $D_0=K_0$ and $D_1=K_1$ are fields and $V$ has dimension 2 on both sides (as considered in \cite{chan_species_2016}), the resulting $\Z$-indexed algebra $\bbS^{nc}(V)$ has similar properties to a polynomial ring $k[x,y]$. The noncommutative projective line $\PP^{nc}(V)$ is then constructed as the quotient of the category of graded $\bbS^{nc}(V)$-modules by the subcategory $\mathsf{Tors}(\bbS^{nc}(V))$ of torsion modules, considered as a noncommutative projective scheme in the sense of Artin and Zhang \cite{artin_noncommutative_1994}. The analogy with the commutative projective line extends further than the construction of $\PP^{nc}(V)$, as $\PP^{nc}(V)$ shares a number of properties with the usual projective line. Both $\bbS^{nc}(V)$ and $\PP^{nc}(V)$ are noetherian, and $\PP^{nc}(V)$ has a tilting sheaf \cite[Proposition 7.2]{chan_species_2016} and satisfies analogues of Serre finiteness/vanishing \cite[Theorem 7.3]{chan_species_2016}, Serre duality \cite[Theorem 9.1]{chan_species_2016} and Grothendieck splitting \cite[Proposition 10.2]{chan_species_2016}. 

A unique feature of these noncommutative projective lines $\PP^{nc}(V)$ is the existence of a canonical closed subscheme $g=0$ of $\PP^{nc}(V)$ (the \textit{point scheme}) defined by a \textit{normal family of elements}: a family of elements $g=\{g_i \in \bbS^{nc}(V)_{i,i+2}\}_{i \in \Z}$ of $\bbS^{nc}(V)$ such that $g\bbS^{nc}(V) = \bbS^{nc}(V)g$ (see Definition 2.4 for the precise definition). This family determines a two-sided ideal $(g)$ of the noncommutative symmetric algebra, the quotient by which we view as the coordinate ring of a closed subscheme of $\PP^{nc}(V)$. The geometric notion of point schemes from \cite{van_den_bergh_non-commutative_2012} is not available in this setting, however $\bbS^{nc}(V)/(g)$ is analogous to the `projectively commutative' quotient defined by the point scheme. As shown in \cite[Section 5]{chan_species_2016}, the quotient $\bbS^{nc}(V)/(g)$ is a twisted ring, hence is noetherian, and a Hilbert basis argument is used to show $\bbS^{nc}(V)$ is also noetherian.

If we let $g_{i}^n := g_i g_{i+2} \ldots g_{i+2n}$ for $n \geq 0$, then by the definition of normality of the family $g$ the set $\{ g_{i}^n \mid i \in \Z, n \geq 0 \}$ (the set of products of the elements $g_i$, viewed as the set of `powers' of $g$) satisfies the left and right Ore conditions. This allows us to formally invert the family $g$ and obtain the $\Z$-indexed algebra $\Lambda := \bbS^{nc}(V)[g^{-1}]$. The `degree 0' component $\Lambda_{00}$ of $\Lambda$ is the coordinate ring of the `affine open' complement of the point scheme. For non-simple $V$, there is a field $k$, a $k$-automorphism $\psi$ and a $\psi$-derivation $\delta$ such that $\Lambda_{00}$ is isomorphic to a skew polynomial ring $k[t;\psi, \delta]$ \cite[Proposition 10.16]{chan_species_2016}. For simple two-sided vector spaces, however, no such description is known.

We are mostly interested in the case where $V$ is simple, so that $V = \bimod{K_0}{F}{K_1}$ for a field $F$ and index 2 subfields $K_0,K_1$. Section 2 contains the necessary background for the rest of the paper. In the third section we construct decompositions of the tensor and noncommutative symmetric algebras (c.f. Definitions 2.2 and 2.3) into direct sums of irreducible subbimodules of the form $F_{\rho}$, with underlying abelian group $F$ and right multiplication twisted by $\rho \in \Aut(F)$. The two Galois involutions $\tau_0$ and $\tau_1$ generate a subgroup $H$ of $\mathrm{Aut}(F)$, and we let $G_i = \mathrm{Gal}(F/K_{i'})$ denote the Galois groups, where $i' = i ~(\mathrm{mod}~2)$. We will abuse notation and implicitly identify our integer subscripts with their residue modulo 2. We then have a map $\overline{(-)}$ from the alternating product of Galois groups $G_{ij} = G_{i+1} \times G_{i+2} \times \ldots \times G_{j-1}$ to $H$, sending $(s_{i+1},s_{i+2}, \ldots, s_{j-1})$ to $s_{i+1} \circ s_{i+2} \circ \ldots \circ s_{j-1} \in H$. For each $j>i+1$, there is a unique element $s=(s_{i+1},\ldots,s_{j-1}) \in G_{ij}$ with $s_l \neq 1$ for all $i<l<j$, which we call $\tau_{ij}$. We define a new $\Z$-indexed algebra $\TT$ with $\TT_{ij} = \bigoplus_{s \in G_{ij}} F_{s}$, where $F_s = F_{\overline{s}}$, with the goal of the third section being to prove the following (c.f. Theorems 3.6 and 3.12).

\begin{theorem}
    There is an isomorphism of $\Z$-indexed algebras $T(F) \xrightarrow{\sim}\TT$ between the tensor algebra $T(F)$ and $\TT$. Moreover, for all $i,j \in \Z$ there are $K_i-K_j$-bilinear isomorphisms
    \[\bbS^{nc}(F)_{ij} \cong \begin{cases}
    K_i \oplus F_{\tau_{i,i+2}} \oplus F_{\tau_{i,i+4}} \oplus \ldots \oplus F_{\tau_{ij}} & \text{ when } j-i \text{ even}, \\
    \bimod{K_i}{F}{K_{i+1}} \oplus F_{\tau_{i,i+3}} \oplus \ldots \oplus F_{\tau_{ij}} & \text{ when } j-i \text{ odd}.
    \end{cases}\]
\end{theorem}

These constructions are more general than stated, and so we expect similar decompositions to exist for other algebras constructed in a similar fashion (Van den Bergh's noncommutative $\PP^1$-bundles \cite{van_den_bergh_noncommutative_2011}, for example).

From the geometric point of view, where we see $g$ as defining a closed subscheme of $\PP^{nc}(F)$, the complement given by $\Lambda_{00}$ should be an affine curve. Algebraically, this means $\Lambda_{00}$ should be a Dedekind domain. Indeed, in the non-simple case where $\Lambda_{00}$ is a skew polynomial ring this is true, as $k[z;\psi,\delta]$ is always a Dedekind domain if $\psi$ is an automorphism \cite[\S1.2.9]{mcconnell_noncommutative_2001}. In fact, it is noted shortly after Theorem 1.1 of \cite{smertnig2017every} that most known examples of noncommutative Dedekind domains are of this form, as well as the skew Laurent series $k[z,z^{-1};\psi]$. Theorem 4.16 and Proposition 4.17 show the following.

\begin{theorem}
    If $\bimod{K_0}{F}{K_1}$ is a simple two-sided vector space, $\Lambda_{00}$ is a Dedekind domain of (Gelfand-Kirillov) dimension 1.
\end{theorem}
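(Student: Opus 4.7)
The plan is to verify the three defining properties in turn: $\Lambda_{00}$ is a domain, $\Lambda_{00}$ is Noetherian and hereditary, and $\Lambda_{00}$ has Gelfand--Kirillov dimension $1$. First I would show $\Lambda_{00}$ is a domain. The noncommutative symmetric algebra $\bbS^{nc}(V)$ is a domain in the $\Z$-indexed sense, i.e.\ each multiplication map $\bbS^{nc}(V)_{i,j} \otimes_{K_j} \bbS^{nc}(V)_{j,k} \to \bbS^{nc}(V)_{i,k}$ sends products of nonzero elements to nonzero elements; this should be extractable from the tensor algebra construction together with normality of the defining relations. Because $g$ consists of nonzero elements and the Ore conditions hold, the localization $\Lambda = \bbS^{nc}(V)[g^{-1}]$ inherits this property, and in particular $\Lambda_{00}$ is a domain. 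Noetherianity of $\Lambda_{00}$ follows from the Noetherianity of $\bbS^{nc}(V)$ together with the standard theory of Ore localizations of $\Z$-indexed algebras.

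To compute the Gelfand--Kirillov dimension, I would write $\Lambda_{00} = \bigcup_{n \geq 0} \bbS^{nc}(V)_{0,2n} \cdot g_{0,n-1}^{-1}$ and equip it with the filtration $F_n \Lambda_{00} := \bbS^{nc}(V)_{0,2n} \cdot g_{0,n-1}^{-1}$. Normality of $g$ yields $F_n \cdot F_m \subseteq F_{n+m}$, and since multiplication by the nonzero normal element $g_{0,n-1}$ is injective, $\dim_{K_0} F_n = \dim_{K_0} \bbS^{nc}(V)_{0,2n}$. The latter grows linearly in $n$ by the analogue of the Hilbert series of $k[x,y]$ for $\bbS^{nc}(V)$ (a consequence of Serre finiteness and the bimodule $V$ having dimension $2$ on both sides). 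Hence $\mathrm{GKdim}(\Lambda_{00}) = 1$.

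The main obstacle is hereditarity. Conceptually, this should reflect the fact that $\PP^{nc}(V)$ is a noncommutative smooth projective curve, so its affine opens ought to have global dimension at most $1$. To make this precise, I would identify $\Mod(\Lambda_{00})$ with a Gabriel quotient of $\qgr(\bbS^{nc}(V))$ by the Serre subcategory of objects supported on the point scheme $g=0$, and then use the known cohomological dimension of $\PP^{nc}(V)$ (cf.\ \cite[Theorem 7.3]{chan_species_2016}) together with an affineness argument to bound $\mathrm{gldim}(\Lambda_{00}) \leq 1$. Alternatively, given a nonzero right ideal $I \subseteq \Lambda_{00}$, one could lift $I$ to a finitely generated graded submodule of the canonical right $\bbS^{nc}(V)$-module, use the curve-like structure of $\bbS^{nc}(V)$ (for instance via a version of Grothendieck splitting) to realise its lift as a direct sum of shifted projective modules up to $g$-torsion, and then descend to show $I$ is projective. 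The simplicity of $V$ will play a decisive role here, since unlike the non-simple case we cannot appeal to the skew polynomial description to streamline the homological calculation.
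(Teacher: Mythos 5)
There is a genuine gap here, and it is in the setup rather than in any single step: you have misidentified what needs to be proved. In the noncommutative setting a hereditary noetherian domain is \emph{not} automatically a Dedekind domain. The definition the paper uses (Definition 4.1, following McConnell--Robson) requires in addition that every two-sided ideal be invertible in the ring of fractions, or equivalently that the only idempotent two-sided ideals are $0$ and the whole ring. Your proposal establishes ``hereditary noetherian domain of GK-dimension $1$'' and stops, silently conflating this with Dedekind. The missing condition is where almost all of the work in the paper's Section 4.1 lives, and it is what actually justifies the title ``$\Lambda_{00}$ is a noncommutative affine curve.''

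Concretely, the paper handles the idempotent-ideal condition as follows. Any two-sided ideal $I\subset\Lambda_{00}$ contracts to a $g$-saturated two-sided ideal $\overline{I}\subset A=\bbS^{nc}(F)$ with torsion-free quotient, and a projective-dimension count (using $\mathrm{gldim}\,A=2$) shows each $e_i\overline{I}$ is free of rank~$1$. Theorem 4.6 then shows a two-sided ideal of $A$ that is free of rank $1$ on both sides is generated by a \emph{normal family of elements}. Consequently both $\overline{I}$ and $\overline{I}^2$ are principal, so $\mathrm{pd}\,\overline{I}/\overline{I}^2\le 1$. On the other hand, if $I$ were idempotent then $\overline{I}/\overline{I}^2$ would be a nonzero $g$-torsion module supported on $B=A/(g)$, and Lemma 4.14 forces it to contain $\m$-torsion elements, which would give projective dimension $2$. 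This contradiction is the heart of the argument, and nothing in your proposal touches it. Your two sketched routes to hereditarity are also vaguer than what the paper does (lift a projective resolution to a $g$-torsion-free resolution over $A$, push through $\pi$ into $\coh\PP^{nc}(F)$ and $\qgr\Lambda$, then use hereditarity of $\coh\PP^{nc}(F)$ together with $\qgr\Lambda\simeq\gr\Lambda$), but that part of your plan is at least morally on target. The GK-dimension argument is correct in spirit, though the paper pins it down precisely by identifying the associated graded $\mathrm{gr}_g(\Lambda_{00})$ with the ring $\mathfrak{b}$ coming from the $2$-Veronese of $B$ and reading off the Hilbert series from Theorem 3.12.
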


Finitely generated modules over noncommutative Dedekind domains have a classification theorem very similar to that of commutative Dedekind domains \cite[\S5.7.7]{mcconnell_noncommutative_2001}, which when restricted to the finite-length modules corresponds to a classification of torsion, $g$-torsion-free sheaves on $\PP^{nc}(F)$ \cite[Proposition 10.15]{chan_species_2016}.

The main result of Section 4.1 involves showing $\Lambda_{00}$ has no non-trivial idempotent ideals. In doing so we will show that two-sided ideals in $\Lambda_{00}$ correspond to free two-sided ideals of left and right rank 1 in $\bbS^{nc}(F)$. Furthermore, these free ideals of rank 1 are precisely the ideals generated by normal families of elements in $\bbS^{nc}(F)$.

\begin{theorem}
    Let $I$ be a two-sided ideal of $\bbS^{nc}(F)$ that is free of rank 1 on the left and right. Then $I$ is generated by a normal family of elements.
\end{theorem}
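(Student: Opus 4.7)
The plan is to extract from the two freeness hypotheses a family of generators living on a common diagonal $\{(i, i+d)\}$ that serves simultaneously as left and right generators; normality then follows immediately.

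First, I would unpack the hypotheses. Right freeness yields, for each $i \in \Z$, an element $b_i \in I_{i, f(i)}$ such that right multiplication by $b_i$ induces an isomorphism of right modules $I_{i, j} \cong \bbS^{nc}(F)_{f(i), j}$, where $f(i)$ is the least column in which the $i$-th row of $I$ is nonzero. Dually, left freeness yields $a_j \in I_{e(j), j}$ with left multiplication by $a_j$ inducing a left-module isomorphism $I_{i, j} \cong \bbS^{nc}(F)_{i, e(j)}$. Since the two descriptions of the support of $I$ must agree, $e$ and $f$ are mutually inverse and strictly order-preserving bijections of $\Z$.

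Second, I would show $e(j+1) = e(j) + 1$, which is the crux of the argument. Examine $I_{e(j), j+1}$ in two ways. From right freeness at row $e(j)$, using $f(e(j)) = j$, this space equals $b_{e(j)} \cdot \bbS^{nc}(F)_{j, j+1}$, which has right $K_{j+1}$-dimension $2$ because the generating bimodule $F$ has right dimension $2$. From left freeness at column $j+1$, it equals $\bbS^{nc}(F)_{e(j), e(j+1)} \cdot a_{j+1}$, of left $K_{e(j)}$-dimension $e(j+1) - e(j) + 1$. Comparing these over a common subfield $k \subseteq K_0 \cap K_1$, which is permissible since $[K_0 : k] = [K_1 : k]$ in the simple rank-two case, gives $e(j+1) - e(j) = 1$. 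So $e(j) = j - d$ and, dually, $f(i) = i + d$ for a common constant $d \ge 0$.

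Finally, at position $(i, i+d)$ the space $I_{i, i+d}$ is one-dimensional on both sides, so $b_i$ and $a_{i+d}$ differ by a unit scalar in $K_i^{\times}$. Setting $g_i := b_i$, the family $g = \{g_i \in I_{i, i+d}\}$ generates $I$ as a right ideal by right freeness, and as a left ideal since $g_i$ is a scalar multiple of $a_{i+d}$ and $\{a_j\}_j$ generate $I$ from the left. Hence $g\,\bbS^{nc}(F) = I = \bbS^{nc}(F)\,g$, so $g$ is a normal family generating $I$, as required. The main obstacle will be the shift-matching step, which requires carefully reconciling left and right dimensions over the possibly distinct residue fields $K_0, K_1$ and leveraging the specific numerical structure of $\bbS^{nc}(F)$ coming from $F$ being simple of rank two.
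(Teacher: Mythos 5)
Your overall skeleton is the same as the paper's (extract a right-generating family $b_i \in I_{i,f(i)}$ and a left-generating family $a_j \in I_{e(j),j}$, show $e$ and $f$ are translations by a fixed amount, and observe the generators then agree up to a unit). However there are two genuine gaps in the execution.

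First, the assertion that ``$e$ and $f$ are mutually inverse and strictly order-preserving bijections'' is stated as if obvious, but it is not, and it is actually where a hypothesis on $\bbS^{nc}(F)$ enters. The paper's Lemma 4.4 proves $t_l t_r = \mathrm{id}$ by a careful argument comparing $a_i K_{t_r(i)}$ with $D_{i, t_l t_r(i)} b_{t_r(i)}$ and invoking the hypothesis $\dim_{K_i} D_{ij} > 1$ for $j > i$ to conclude $t_l t_r(i) = i$. You would need to supply this argument; merely observing that ``the two descriptions of the support must agree'' does not by itself establish that the minimal columns/rows are matched by a single bijection, nor that it is inverse to its counterpart.

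Second, and more seriously, the dimension-counting step that drives $e(j+1) = e(j)+1$ is broken in exactly the case the paper most cares about. You compare the right $K_{j+1}$-dimension of $I_{e(j),j+1}$ (which is $2$) with its left $K_{e(j)}$-dimension (which is $e(j+1)-e(j)+1$) by passing to a common subfield $k \subseteq K_0 \cap K_1$, justified because ``$[K_0:k] = [K_1:k]$ in the simple rank-two case.'' But this common index is finite only when $\bimod{K_0}{F}{K_1}$ is \emph{algebraic}; in the non-algebraic case $K_0 \cap K_1$ has infinite index in each $K_i$ (this is Proposition 5.4 combined with Corollary 5.5), so the dimension comparison over $k$ is meaningless. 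Since the non-algebraic case is the one giving the new simple Dedekind domains, this is not a minor caveat. The paper sidesteps the issue entirely in Lemma 4.5: positivity of the indexing forces $t_r$ to be strictly increasing, and a strictly increasing bijection $\Z \to \Z$ with no gaps must be translation by a constant $\delta$. No Hilbert-function or field-degree computation is needed, which is also why the paper's Theorem 4.6 holds for any connected, positively indexed $D$ with $\dim_{K_i} D_{ij} > 1$, not just $\bbS^{nc}(F)$. If you replace your dimension comparison with this combinatorial argument, your proof converges with the paper's.

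The final step (at $(i,i+d)$ the one-dimensionality on each side forces $b_i$ and $a_{i+d}$ to agree up to a unit in $K_i$, whence the single family generates both one-sidedly) is correct in spirit and matches the paper's conclusion; one should just be careful, as the paper is, about checking $D a_j = D a_j r$ for a unit $r$ on the appropriate side.
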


In fact, we show a slightly more general result, reflecting a similar fact for $\Z$-graded rings, from which the above statement follows (Corollary 4.11). The precise statement may be found in Theorem 4.7. 

A similar affine open subscheme of a noncommutative projective scheme defined by inverting a normal element arises in the study of noncommutative $\PP^1 \times \PP^1$’s, which are classified in \cite{artin_modules_1991,van_den_bergh_noncommutative_2011}. The rings corresponding to these noncommutative schemes are the \textit{cubic Artin-Schelter regular algebras}. It is shown in \cite{artin_algebras_2007} that a cubic regular algebra of dimension 3 is determined by an elliptic curve $E$ (embedded as a divisor of bi-degree $(2,2)$ in $\PP^1 \times \PP^1$), a line bundle $\cL$ on $E$ and an automorphism $\sigma \in \mathrm{Aut}(\cL)$. Similarly to the noncommutative symmetric algebra $\bbS^{nc}(F)$, a cubic regular algebra of dimension 3 contains a normal element $g$, now of degree 4, such that the ring $\Lambda_{0}$ obtained by inverting $g$ satisfies the following dichotomy \cite[Section 7]{artin_modules_1991}.

\begin{itemize}
    \item If $\sigma$ has finite order, $\Lambda_{0}$ is finite over its center.
    \item If $\sigma$ has infinite order, then $\Lambda_{0}$ is a simple ring.
\end{itemize}

A dichotomy on two-sided vector spaces $V$ also exists via the connection to representation theory afforded by the tilting sheaf constructed in \cite[Section 7]{chan_species_2016}. The endomorphism ring of this tilting sheaf is an upper triangular matrix ring $\begin{pmatrix} 
K_0 & V \\
0 & K_{1}
\end{pmatrix}$, which is precisely the `path algebra' of a $(2,2)$-bimodule in the sense of Dlab and Ringel \cite{dlab_indecomposable_1976, ringel_representations_1976}. In this setting, a $(2,2)$-bimodule (which is what we call a two-sided vector space of rank 2) $\bimod{K_0}{V}{K_1}$ is said to be \textit{algebraic} if $K_0$ and $K_1$ have a common subfield $k$ of finite index which acts centrally on $V$. For algebraic bimodules, the regular modules (the image of torsion sheaves under the derived equivalence) are well understood \cite[Theorem 1]{ringel_representations_1976}, however little is known of the non-algebraic case. In Section 5 we will see that algebraicity of the two-sided vector space $V$ determines a similar dichotomy on $\Lambda_{00}$ to that discussed above for noncommutative quadrics.

\begin{theorem}
Let $\sigma := \tau_1\tau_0$ denote the composition of the Galois involutions $\tau_i$ corresponding to the extensions $K_i\subset F$. Then
    \begin{itemize}
    \item $\bimod{K_0}{F}{K_1}$ is algebraic $\Longleftrightarrow$ $\vert \sigma \vert < \infty$ $\Longrightarrow$ $\Lambda_{00}$ is finite over its center,
    \item $\bimod{K_0}{F}{K_1}$ is non-algebraic $\Longleftrightarrow$ $\vert \sigma \vert = \infty$ $\Longrightarrow$ $\Lambda_{00}$ is a simple ring.
\end{itemize}
\end{theorem}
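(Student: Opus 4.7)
The three equivalences split naturally into a Galois-theoretic half and a ring-theoretic half. I would first prove $\bimod{K_0}{F}{K_1}$ algebraic $\Leftrightarrow |\sigma| < \infty$. Since $\tau_i$ fixes $K_i$, the fixed subfield of $\langle \tau_0, \tau_1\rangle \subseteq \Aut(F)$ is $K_0 \cap K_1$. Direct computation gives $\tau_0 \sigma \tau_0 = \sigma^{-1}$, so $\langle \tau_0, \sigma \rangle = \langle \tau_0, \tau_1\rangle$ is dihedral. When $|\sigma|=n<\infty$ this group has order $2n$, and Artin's theorem yields $[F : K_0 \cap K_1] = 2n$, making $K_0 \cap K_1$ a common central subfield of finite index which witnesses algebraicity. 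Conversely, algebraicity furnishes a common central subfield $k$ with $[F:k]<\infty$, so $\sigma \in \Aut(F/k)$ lies in a finite group.

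For the ring-theoretic dichotomy I would exploit the tilting equivalence of \cite[Section 7]{chan_species_2016}, identifying $\coh \PP^{nc}(F)$ with modules over the Dlab-Ringel hereditary algebra $\begin{pmatrix} K_0 & F \\ 0 & K_1 \end{pmatrix}$. In the algebraic case, Dlab-Ringel's classification of the regular component yields a family of homogeneous tubes parameterised by a commutative Dedekind-like base; pulling these back through the tilting equivalence should produce a normal element $z \in \bbS^{nc}(F)_{0, 2m}$ (with $m$ determined by $|\sigma|$) whose associated rational element $z g_0^{-1} \cdots g_{2m-2}^{-1} \in \Lambda_{00}$ is central. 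Together with the centrally embedded $K_0 \cap K_1$, this should generate a commutative subring over which a Hilbert-series / rank count shows $\Lambda_{00}$ is module-finite.

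For $|\sigma|=\infty \Rightarrow \Lambda_{00}$ simple, I would use that $\Lambda_{00}$ is a Dedekind domain by Theorem 4.15, so any nonzero proper two-sided ideal lies in a maximal one $\mathfrak{m}$. Its contraction to $\bbS^{nc}(F)$ is a graded two-sided ideal; passing to a rank-$1$ free sub-ideal lets Theorem 4.6 apply, producing a normal family $h$ generating it. The crux is then a sublemma showing that any such $h$ which does not generate the same two-sided ideal as $g$ would determine a finite $\sigma$-orbit of one-dimensional subspaces of $F$, contradicting $|\sigma|=\infty$. Consequently $\mathfrak{m}$ becomes the unit ideal after localisation, and $\Lambda_{00}$ is simple.

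The main obstacle is making the correspondence between normal families and $\sigma$-equivariant data on $F$ fully explicit: writing down the periodic normal element in the algebraic case, and ruling out extraneous normal families in the non-algebraic case. Both rely on a concrete analysis of the multiplication in $\bbS^{nc}(F)$ via the bimodule bases of \cite{chan_species_2016}, and this normality analysis is expected to be the heart of the argument.
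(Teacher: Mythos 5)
Your Galois-theoretic half (algebraic $\Leftrightarrow |\sigma| < \infty$) matches the paper's Proposition 5.4 / Corollary 5.5 and is correct. The two ring-theoretic implications, however, diverge from the paper and each contain a gap.

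For algebraic $\Rightarrow \Lambda_{00}$ finite over its center, you sketch a constructive route through the tilting equivalence, hoping to extract a central element $zg^{-m}$ from the Dlab--Ringel regular component and generate a commutative subring over which $\Lambda_{00}$ is module-finite. The paper takes a much shorter road: $k = K_0 \cap K_1$ is central, $\dim_k \Lambda_{00}^1 < \infty$ so $\Lambda_{00}$ is an affine $k$-algebra, Proposition 4.16 gives $\mathrm{GKdim}\,\Lambda_{00} = 1$, and the Small--Stafford--Warfield theorem then shows $\Lambda_{00}/N(\Lambda_{00})$ is finite over its center; since $\Lambda_{00}$ is a domain the nilradical vanishes. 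Your approach avoids the Small--Stafford--Warfield black box but is entirely conditional: you never produce the claimed central $z$, and it is not obvious that a normal family in $\bbS^{nc}(F)_{0,2m}$ (beyond $g$ itself) exists with central localisation. If you want to pursue it you would essentially have to re-prove Proposition 5.7 in reverse, exhibiting a normal family supported away from the $K_i$-component, which the paper's bimodule decomposition (Theorem 3.12) makes tractable --- but as written this step is a genuine gap.

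For $|\sigma| = \infty \Rightarrow \Lambda_{00}$ simple, you are structurally aligned with the paper's Theorem 5.8, but two ingredients are missing. First, you write of ``passing to a rank-$1$ free sub-ideal'' before applying Theorem 4.6; that is not quite right, and it matters: the paper's Proposition 4.9 proves that the contraction $\overline{I}$ itself is already free of rank $1$ on both sides, via the $g$-saturation of $\overline{I}$ (Lemma 4.7), the resulting $\m$-torsion-freeness of $A/\overline{I}$ (Lemma 4.8), the global-dimension-$2$ estimate, and a Hilbert-polynomial count. Passing to a proper free sub-ideal would only yield a normal family generating a smaller ideal, not $\overline{I}$, and simplicity would not follow. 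Second, and more importantly, the sublemma you postulate --- that any normal family not a power of $g$ would yield a finite $\sigma$-orbit --- is precisely the paper's Proposition 5.7, whose proof is the actual content: decompose each $x_i$ against $F_{\tau^0}\oplus F_{\tau^1}\oplus\cdots$ using Theorem 3.12, read off from $bx_i = x_i c$ that each nonzero component $x_{i,j}$ with $j\neq 0$ forces $\tau_{i,i+\delta}^{-j}(K_i)=K_i$, and rule this out when $|\sigma|=\infty$. You correctly identify this as the heart of the argument, but leaving it as a conjectural sublemma means the key step of the theorem is unproved.
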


The proof of Theorem 1.4 is contained in Corollary 5.5 and Theorems 5.6 and 5.9. In a sense, Theorem 1.4 may be viewed as a version of \cite[Theorem 7.3]{artin_modules_1991} `over the generic point' of the noncommutative quadrics considered in \cite{artin_modules_1991}. We also remark that for non-algebraic bimodules, $\Lambda_{00}$ seems to be a new example of a simple noncommutative Dedekind domain. Indeed, known examples of simple Dedekind domains presented in both \cite{mcconnell_noncommutative_2001} and \cite{smertnig2017every} are skew polynomial rings $k[x;\psi,\delta]$ or skew Laurent rings $k[x,x^{-1};\psi]$, of which we will see $\Lambda_{00}$ appears to be neither.

\section{$\Z$-indexed algebras and noncommutative $\PP^1$'s}
In this section we will cover the necessary background on $\Z$-indexed algebras, noncommutative projective schemes and the construction of the noncommutative symmetric algebra $\bbS^{nc}(V)$ of a two-sided vector space.

\subsection{Indexed algebras}
The notion of indexed algebras first appeared in \cite{bondal1994homological,sierra_g-algebras_2011,van_den_bergh_noncommutative_2011}. Recall \cite[Section 2]{van_den_bergh_noncommutative_2011} that for a set $I$ (most commonly $I=\Z$), an \textit{$I$-indexed algebra} is a category $\A$ enriched over the category $\Ab$ of abelian groups, with $\mathrm{ob}(\A) = I$. Graded (right) $\A$-modules are then defined as additive functors $M:\A \to \Ab$. We will denote the category of graded right $A$-modules by $\Gr A$, which is simply the additive functor category $[\A,\Ab]$. 

Given an $I$-indexed algebra $\A$, the abelian groups of morphisms $A_{ij} := \Hom_{\A}(\A_j, \A_i)$ for $i,j \in I$ assemble to form a non-unital ring $A = \bigoplus_{i,j \in I} A_{ij}$. The addition on $A$ comes from the abelian group structure on each $A_{ij}$ and, for $i,j,r,l \in I$, multiplication is given by composition $A_{ij} \times A_{rl} \to A_{il}$ when $j=r$, and is $0$ if $j \neq r$. We will frequently equate $A$ with the $I$-indexed algebra $\A$, as we may recover $\A$ by taking $\mathrm{ob}(\A) = I$ and $\Hom_{\A}(j,i) = A_{ij}$ for $i,j \in I$. Similarly to the ring $A$, a graded right $\A$-module may equivalently be described as an $I$-graded abelian group $M = \bigoplus_{i \in I}M_i$ with abelian group homomorphisms $M_i \times A_{ij} \to M_j$ satisfying the right module axioms. We will work only with right modules, referring to them simply as modules, although left modules may be defined similarly in terms of contravariant functors to $\Ab$.

When $I=G$ is a group, $G$-indexed algebras generalise $G$-graded rings, as any $G$-graded ring $R=\bigoplus_{g \in G} R_g$ determines a graded Morita equivalent $G$-indexed algebra $\widehat{R} = \bigoplus_{g,h \in G} R_{g^{-1}h}$. The converse is not true, however, as there are $G$-indexed algebras not arising as $\widehat{R}$ for a $G$-graded ring $R$. In particular, $\Z$-indexed algebras are important to noncommutative projective geometry as there exist noncommutative $\PP^1 \times \PP^1$'s, in the sense of \cite{van_den_bergh_noncommutative_2011}, with $\Z$-indexed algebra coordinate rings not associated to any $\Z$-graded ring. 

While $A$ is generically non-unital, there are always `local units' $e_i \in A_{ii}$ corresponding to the identity morphism of the object $\A_i$ in $\A$. The local units $e_i$ determine right modules $e_iA$, which are precisely the rank 1 free modules over $A$ and form a set of projective generators for $\Gr A$ \cite[Section 3]{van_den_bergh_noncommutative_2011}. 

Given an $I$-indexed algebra $A$, we let $A_{\Delta} := \bigoplus_{i \in I} A_{ii}$. Then $\diag{A}$ is a sub-$I$-indexed algebra of $A$ which we call the \textit{base} of $A$. This subalgebra is analogous to the degree 0 component of a graded ring, so we say $A$ is \textit{connected} if each $A_{ii}$ is a division ring.

Given two sets $I$ and $J$, an $I$-indexed algebra $A$, and a $J$-indexed algebra $B$, we may form the (external) tensor product
\[A \otimes B := \bigoplus_{(i,j)(i',j') \in I \times J} A_{ii'} \otimes_{\Z} B_{jj'}.\]
Here external refers to the fact that these objects are $I \times J$-indexed, rather than $I$ or $J$-indexed. Using this tensor product, $A-B$-bimodules are also defined via the enriched functor category $[\A^{op} \otimes \BB,\Ab]$. An $A-B$-bimodule $T$ determines an adjoint pair of functors
$- \underline{\otimes}_A T: \Gr A \to \Gr B$ and $\underline{\Hom}_{B}(T_B,-): \Gr B \to \Gr A$ \cite{nyman_abstract_2019,mori_local_2021}, and we have that $\underline{\Hom}_A(A,M) \cong M \cong M \underline{\otimes}_A A$ for any $M \in \Gr A$.

We now insist $I=\Z$. As was pointed out in \cite{presotto_noncommutative_2016}, the abstract definition of indexed algebras allows the usual theory of Ore localisation (through formally inverting morphisms in a category) to be applied to $\Z$-indexed algebras. An element $a \in A_{ij}$ is a \textit{non-zero-divisor} if for all nonzero $b \in e_jA$ and $c \in Ae_i$, we have that $ab, ca \neq 0$. A $\Z$-indexed algebra $D$ is a \textit{domain} if every element of $D$ is a non-zero-divisor, and a graded $D$-module $M$ is \textit{uniform} if any two non-zero submodules have non-zero intersection. If a $\Z$-indexed domain $D$ is such that $e_iD$ and $De_i$ are uniform for all $i \in \Z$, then $D$ has a $\Z$-indexed ring of fractions $Q(D)$ \cite[Proposition 2.1]{chan_species_2016}. This ring of fractions is a $\Z$-indexed domain in which every non-zero morphism is invertible. A right $D$-module $M$ is (ungraded) \textit{torsion} if $M \underline{\otimes}_D Q = 0$. The kernel $t(M)$ of the localisation map $M \to M \underline{\otimes}_D Q$ is the largest (ungraded) torsion submodule of $M$, and has torsion-free quotient $M / t(M)$.

\subsection{Noncommutative projective schemes}
For the remainder of the paper we will suppose $I=\Z$, unless otherwise specified. For a $\Z$-indexed algebra $A$, we say $A$ is \textit{positively indexed} if $A_{ij} = 0$ for all $i,j \in \Z$ with $i>j$. We will suppose all our $\Z$-indexed algebras $A$ are connected and positively indexed, in which case we will denote the two-sided ideal $\bigoplus_{i<j}A_{ij}$ by $\m$. This ideal is analogous to the ideal $R_+ = \bigoplus_{n \geq 1}R_n$ of an $\N$-graded ring $R$. Furthermore, as $A$ is connected the quotients $S_i := e_iA / e_i\m\cong A_{ii}$ are simple $A$-modules. 

An $A$-module $M$ is \textit{finitely generated} if there is a surjection $\bigoplus_{l=1}^r e_{i_l}A \to M$. Similarly, $M$ is \textit{finitely presented} if $M$ is the cokernel of a morphism of free and finitely generated modules: $M \cong \mathrm{coker}(\bigoplus_{l=1}^r e_{i_l}A \to \bigoplus_{j=1}^s e_{i_j}A)$. An $A$-module $M$ is \textit{coherent} if it is finitely generated and the kernel of any map $\bigoplus_{j=1}^n e_{i_j}A \to M$ is also finitely generated. We let $\gr A$ denote the full subcategory of finitely presented $A$-modules and let $\mathsf{coh} A$ be the full subcategory of coherent $A$-modules. The latter is an abelian subcategory of $\mathsf{Gr} A$ which is closed under extensions. If all of the free modules $P_i$ and simple modules $S_i$ are coherent, we say $A$ is a (right) \textit{coherent} $\Z$-indexed algebra \cite[Section 1]{polishchuk_noncommutative_2005}, in which case $\gr A=\coh A$.

Following the ideas of Artin and Zhang \cite{artin_noncommutative_1994}, we define the (noncommutative) projectivisation of a coherent $\Z$-indexed algebra $A$ via its coherent sheaves. For such an $A$, we let $\qgr A$ be the quotient $\gr A / \mathsf{tors} A$, where $\mathsf{tors} A$ is the Serre subcategory of ($\m$-)\textit{torsion modules}: $M \in \gr A$ such that for any $m \in M$, $(mA)_n = 0$ for $n \gg 0$ \cite{polishchuk_noncommutative_2005}. We may then form the quotient $\qgr A := \gr A / \mathsf{tors} A$, which we view as the category of coherent sheaves on the noncommutative projectivisation of $A$. We denote the corresponding quotient functor by $\pi: \gr A \to \qgr A$. We also note that the assignment of a coherent module $M$ to its largest torsion submodule $\tau(M)$ is functorial, so we call $\tau:\gr A \to \mathsf{tors}A$ the \textit{torsion functor}.

\subsection{Noncommutative symmetric algebras and $\PP^1$s}
For the remainder of the paper we will suppose all our fields have characteristic $\neq 2$.

\begin{defn}{\cite[Section 3.1]{chan_species_2016}}
    Let $K_0$ and $K_1$ be fields. A two-sided $K_0-K_1$-vector space is a $K_0-K_1$ bimodule $V$.
\end{defn}

We will refer to such objects as two-sided vector spaces and bimodules interchangeably. Given a two-sided $K_0-K_1$-vector space $V$, the \textit{dimension vector} $\underline{\dim} V$ is the pair of positive integers $(\dim_{K_0} V, \dim_{K_1}V) \in \Z^+ \times \Z^+$. If $\underline{\dim} V = (n,n)$ for some $n \in \Z^+$ we say $V$ has \textit{rank }$n$. Unlike vector spaces, a two-sided vector space $V$ may be simple even when $V$ is not rank 1. If this is the case for a two-sided $K_0-K_1$-vector space $V$ of rank 2, then $V=F$ is a field and $K_0$ and $K_1$ are subfields of index 2 in $F$ \cite[Lemma 3.1(2)]{chan_species_2016}.

If $\bimod{K_0}{V}{K_1}$ has finite dimension vector $(m,n)$, then the duals
\[\ld{V} = \Hom_{K_0}(\bimod{K_0}{V}{K_1}, K_0)\]
and
\[\rd{V} = \Hom_{K_1}(\bimod{K_0}{V}{K_1}, K_1)\]
are both $K_1-K_0$ vector spaces with dimension vector $(n,m)$, however in general $\ld{V} \not\simeq \rd{V}$ as $K_1-K_0$-vector spaces. In the case that $V$ is simple and has rank 2, the left and right duals are computed in \cite{chan_species_2016}. The extensions $F/K_i$ are Galois, with Galois group $G_i = \{ 1, \tau_i \}$ and (reduced) trace map $tr_i: F \to K_i$ sending $a \in F$ to $\frac{1}{2}(a + \tau_i(a))$. These trace maps then define $K_1-K_0$-bilinear isomorphisms
\[\bimod{K_1}{F}{K_0} \xrightarrow{\sim} \ld{V}, \qquad a \mapsto tr_0(a-)\]
and
\[\bimod{K_1}{F}{K_0} \xrightarrow{\sim} \rd{V}, \qquad a \mapsto tr_1(-a).\]
To consider repeated duals we inductively define $V^{i \ast} = (V^{i-1\ast})^{\ast}$ if $i>0$. We construct a symmetric algebra of a rank 2 two-sided vector space by way of Van Den Bergh's \textit{noncommutative symmetric algebras}, introduced in \cite{van_den_bergh_non-commutative_2012}. The original construction is more general, working with rank 2 sheaves of $\cO_X-\cO_Y$ bimodules for a pair of smooth $k$-schemes $X$ and $Y$; we will work only with the case $X = \mathsf{Spec}(K_0)$ and $Y = \mathsf{Spec}(K_1)$. We begin by defining the $\Z$-indexed tensor algebra.

\begin{defn}{\cite[Section 3.2]{chan_species_2016}}
    Let $\bimod{K_0}{V}{K_1}$ be a two-sided vector space. The \textit{tensor algebra} of $V$ is the $\Z$-indexed algebra $T(V)$ where, for $i,j \in \Z$, we have 
    \[T(V)_{ij} = \begin{cases} K_{i'}, \text{ where }i' = i~ \mathrm{mod}~2, & \text{if }i=j, \\
    V^{i\ast} \otimes_{K_{i+1}} V^{i+1\ast} \otimes_{K_{i+2}} \ldots \otimes_{K_{j-1}} V^{j-1 \ast}, & \text{ if }i<j, \\
    0 & \text{ if }i>j.
    \end{cases}\]
    Multiplication is given by the tensor product $T_{ij} \otimes_{T_{jj}} T_{rl} \to T_{il}$ when $j=r$, and is $0$ otherwise.
\end{defn}

In the future, we will drop the $K$ from the tensor subscripts retaining only the integer, which again we consider mod 2 implicitly. For each $i \in \Z$, the functors $(-\otimes_{i} V^{i \ast})$ and $(- \otimes_{i+1} V^{i+1 \ast})$ form an adjoint pair, so the unit of this adjunction gives an inclusion 
\[T_{ii} \hookrightarrow T_{i,i+1} \otimes_{i+1} T_{i+1,i+2} = T_{i,i+2}.\]
We denote the image of each of these inclusions by $Q_{i} \subset T_{i,i+2}$, and we let $R$ denote the ideal of $T$ generated by the bimodules $Q_{i}$. 

\begin{defn}{\cite[Section 3.2]{chan_species_2016}}
    The \textit{noncommutative symmetric algebra} of $V$ is the quotient $\Z$-indexed algebra $\bbS^{nc}(V):=T(V) / R$.
\end{defn}

This choice of relations is unique up to graded Morita equivalence. Indeed, noncommutative symmetric algebras satisfy a form of \textit{twisting} which induces a graded Morita equivalence \cite[Section 3.2]{van_den_bergh_non-commutative_2012}, and the quotient of $T(V)$ by any non-degenerate quadratic relations (in the sense of \cite[Definition 3.1.12]{van_den_bergh_non-commutative_2012}) is related to $\bbS^{nc}(V)$ by twisting \cite[Section 4.1]{van_den_bergh_non-commutative_2012}.

Recall (\cite{van_den_bergh_noncommutative_2011}) that for $d \in \Z$ we define the \textit{shifts} $A[d]$ of a $\Z$-indexed algebra $A$ to be the $\Z$-indexed algebra defined by $A[d]_{ij} := A_{i+d,j+d}$, and we say $A$ is \textit{$d$-periodic} if there is an isomorphism $A \cong A[d]$. By again using the computation of duals from \cite{chan_species_2016}, we see that $\bbS^{nc}(V)$ is $2$-periodic if $V$ is of rank 2.

The noncommutative symmetric algebra $\bbS^{nc}(V)$ is always coherent if $\underline{\dim}(V)$ is finite \cite[Theorem 5.4(1)]{chan_representation_2019}, and is noetherian if $V$ has rank 2 \cite[Theorem 5.2]{chan_species_2016}. In either case, we may construct the noncommutative projective scheme $\PP^{nc}(V) := \qgr \bbS^{nc}(V)$. An important property of $\bbS^{nc}(V)$ for $V$ of rank 2 is the existence of a multiplicity 2 point in $\PP^{nc}(V)$, which corresponds to a \textit{normal family of elements} $g = \{g_i\}_{i \in \Z}$ of degree 2.

 \begin{defn}{\cite[Section 4]{chan_species_2016}}
    Let $A$ be a $\Z$-indexed algebra and $x = \{x_i\}_{i \in \Z}$ be a family of elements $x_i \in A_{i,i+\delta}$. We say $x$ is a \textit{normal family of elements of degree} $\delta$ if $x_iA_{i+\delta,j+\delta} = A_{ij}x_j$ for all $i,j \in \Z$.
 \end{defn}
 
We now suppose $F$ is a simple, rank 2 two-sided vector space over a pair of fields $K_0$ and $K_1$, so that the relations of $\bbS^{nc}(F)$ are generated by the images $Q_{i}$ of the unit morphisms $K_i \hookrightarrow F^{i \ast} \otimes_{i+1} F^{i+1 \ast}$. These images are rank 1 two-sided $K_i$-subspaces of $\bbS^{nc}(F)_{i,i+2}$ generated by the elements 
\[ h_{i} = 1 \otimes_{i+1} 1 + w_{i+1}^{-1} \otimes_{i+1} w_{i+1}, \]
where $w_{i+1} \in F$ is such that $w_{i+1} \not\in K_{i+1}$ but $w_{i+1}^2 \in K_{i+1}$ \cite[Section 4.1]{chan_species_2016}. Equivalently, we have that $\tau_{i+1}(w_{i+1})=-w_{i+1}$. Following \cite{chan_species_2016}, we then consider the $F$-closure $FQ_{i}$ of $Q_{i}$, which is generated by $h_{i}$ and 
\[ g_{i} := w_{i+1}h_{i} = h_{i}w_{i+1}. \] 
Then $g = \{g_i\}_{i \in \Z}$ is a normal family of elements of degree 2 in $\bbS^{nc}(F)$ \cite[Proposition 4.3]{chan_species_2016}. We will denote `powers' of $g$ as $g_i^r=g_{i}g_{i+2}\ldots g_{i+2r}$ for $i \in \Z$.

Normality tells us that the right ideal $g\bbS^{nc}(F)$ is equal to the left ideal $\bbS^{nc}(F)g$, so we view the quotient $\Z$-indexed algebra $B := \bbS^{nc}(V) / (g)$ as the coordinate ring of a closed subscheme (the \textit{point scheme}) defined by $g=0$. We mention, as is noted in \cite{chan_species_2016}, that the geometric notion of point scheme from \cite{van_den_bergh_non-commutative_2012} does not exist in this setting. However, $B$ is analogous to the `projectively commutative' quotient determined by the point scheme. The quotient $B$ is noetherian \cite[Theorem 5.2]{chan_species_2016}, so $\qgr B$ exists. Furthermore, $B_{ij} \cong F$ if $j-i \geq 1$ \cite[Lemma 5.4]{chan_species_2016}, and so $B$ has Hilbert series 
\[\sum_{n \geq 0} (\dim_{K_i}B_{i,i+n})t^n = 1 + 2t + 2t^2 + \ldots = \frac{1+t}{1-t}.\]
As such, we see that $B$ has Gelfand-Kirillov dimension 1 ($\qgr B$ is a point) and has multiplicity 2, as the dimension function $\dim_{K_i}B_{i,i+n} = 2$ for all $n>0$.

Normality also tells us that the set $\{ g_i^n \mid i \in \Z,~n \in \N \}$ satisfies the left and right Ore conditions. We may then formally invert the family $g$ to obtain the $\Z$-indexed algebra $\Lambda = \bbS^{nc}(F)[g^{-1}]$ and the `degree 0' component $\Lambda_{00}$. From the geometric perspective, $\Lambda_{00}$ is the coordinate ring of the `affine open' complement of $\qgr B$ in $\PP^{nc}(F)$. 

\begin{ex}
    We end this section by presenting a well-known example, naturally re-interpreted through the lens of noncommutative algebraic geometry: sheaves of differential operators on commutative curves. This example is a specific case of \cite[Theorem 2.4]{patrick_symmetric_2000}, which is essentially a global version of \cite[Section 5.1]{patrick_ruled_1997}. Let $C$ be an (irreducible, reduced) algebraic curve $C$ over a field $k$ of characteristic 0. For a point $p$ on $C$, we let $R = \Gamma(\cO_{C,p})$ be the local ring at $p$. Then $R$ is a discrete valuation ring, so has a unique maximal ideal $\m_p = (t)$ which is principal, where $t$ is a uniformising parameter. We will denote the fraction field of $R$ by $K$. The ring $\DD_p(C)$ of local differential operators at $p$ is an Ore extension $R[\del_t;\frac{d}{dt}]$ of $R$, and so has a natural filtration by the degree in $\del_t$: 
    \[D^i = \{ p \in \DD_p(C) \mid \deg_{\del_t}p \leq i \}.\]
    It is clear $D^0 = R$, and we have an exact sequence of $R$-bimodules
    \[ 0 \to R \to D^1 \to R\cdot \del_t \to 0. \]
    If we now pass to the generic point the above sequence becomes
    \[ 0 \to K  \to V \to K\cdot \del_t \to 0, \]
    where $V$ is a $K$-bimodule of dimension 2 on the left and right, and where the right $K$-action is given by
    \[ (p + q\del_t)f = fp + fq\del_t + \frac{df}{dt}q = f(p + q\del_t) + \frac{df}{dt}q \]
    for $p,q,f \in K$. Note that $K$ does not act centrally on $V$, nor does any finite index subfield of $K$, so $V$ is non-algebraic.
    
    The bimodule $V$ is a non-simple two-sided $K$-vector space of rank 2, as $K \subset V$ is a subbimodule. We may then construct the noncommutative symmetric algebra $\bbS^{nc}(V)$. We take a left basis $\{ z, \del_t \}$ for $V$ with $z \in K$ (which is also a right basis \cite{patrick_symmetric_2000}). Then $\bbS^{nc}(V)$ is a quotient of the $K$-bimodule tensor algebra of $V$ (thought of as noncommutative polynomials in $z$ and $\del_t$ where $K$ does not act centrally) by a quadratic relation which replaces the commutator of the generators in the usual symmetric algebra construction. Theorem 2.4 of \cite{patrick_symmetric_2000} shows that our case, any such quadratic relation in $T(V)$ has the form $(\del_tz-z\del_t-rz^2)$ for some $r \in K$. As was mentioned directly after Definition 2.3, we may suppose $r=0$ up to graded Morita equivalence (and hence also equivalence of $\PP^{nc}(V)$). Then
    \[ \bbS^{nc}(V) = K\langle z,\del_t \rangle/(zf-fz,\del_tf - f\del_t - \frac{df}{dt}z, \del_tz-z\del_t), \]
    which we recognise as the homogenisation in $z$ of the ring $K[\del_t;\frac{d}{dt}] = K \otimes_R \DD_p(C)$.

    It is shown in \cite[Section 4.2]{chan_species_2016} that the normal family of elements $g$ in $\bbS^{nc}(V)$ is determined by a single normal element $g=z \in \bbS^{nc}(V)$ (of course, this choice is obvious in our example as $z$ is central). Then $\Lambda_{00}$ is generated as a ring by $K$ and $x := \del_t z^{-1}$ \cite[Proposition 10.16]{chan_species_2016}, and for any $f \in K$ we have
    \begin{align*}
        xf = (\del_t z^{-1})f = \del_t f z^{-1} = (f\del_t + \frac{df}{dt}z)z^{-1} = fx + \frac{df}{dt}.
    \end{align*}
    It is then clear that the $\Lambda_{00}$ is a skew polynomial ring $K[x;\frac{d}{dt}]$. Compare this with the local setting of \cite[Section 5.1]{patrick_ruled_1997} where, in the case $C = \PP^1(k)$, the (homogenised) sheaf of local differential operators $\cS := H(\DD(C))$ is constructed as the noncommutative symmetric algebra of a sheaf of rank 2 $\cO_C$-bimodules. At a point $p \in C$, the stalk of $\cS$ has the form $\cS_p = R\langle z,\del_t \rangle / (\del_t t - t \del_t - z, zt-tz, z\del_t - \del_t z)$ \cite[Section 5.1]{patrick_ruled_1997}. As such, $\cS_p[z^{-1}] \cong R[\del_t z^{-1};\frac{d}{dt}] = R[x;\frac{d}{dt}]$, and hence $\Lambda_{00} = K \otimes_R \cS_p[z^{-1}]$.
\end{ex}

\section{Bimodule decompositions of tensor and noncommutative symmetric algebras}
Let $K_0,K_1$ and $F$ be fields such that $\bimod{K_0}{F}{K_1}$ is a simple, rank 2 two-sided vector space, and let $T=T(F)$ and $A=\bbS^{nc}(F)$ be the tensor algebra and noncommutative symmetric algebra as in Section 2.3. In this section we decompose the $K_i-K_j$-bimodules $T_{ij}$ and $A_{ij}$ into direct sums of irreducible subbimodules. We let $G_i = \mathrm{Gal}(F/K_i) = \{ 1, \tau_i \}$ be the corresponding Galois groups, noting that again we implicitly assume our integer subscripts are taken mod 2, and let $H=\langle \tau_0,\tau_1 \rangle$ be the subgroup of $\Aut(F)$ generated by $G_0$ and $G_1$. For $j-i > 1$, we define the set $G_{ij}:= G_{i+1} \times \ldots \times G_{j-1}$ and the map $\overline{(-)}:G_{ij} \to H$ sending $(s_{i+1},\ldots, s_{j-1}) \in G_{ij}$ to $s_{i+1} \circ s_{i+2} \circ \ldots \circ s_{j-1} \in H$. We will denote the elements $(\tau_{i+1},\tau_{i+2},\ldots,\tau_{j-1})$ by $\tau_{ij}$, which are the unique $s \in G_{ij}$ with $s_l\neq 1$ for $i<l<j$. For any $s \in G_{ij}$, we define a two-sided $K_i-K_j$-vector space $F_{s}$ with underlying abelian group $F$ and scalar multiplication $a.x.b = a\cdot x\cdot \overline{s}(b)$, where $a \in K_i$, $b \in K_j$, $x \in F$ and $\cdot$ is the usual $F$-multiplication. Letting $\TT_{ij} = \bigoplus_{s \in G_{ij}} F_{s}$ (with $\TT_{ii} = K_i$ and $\TT_{i,i+1} = \bimod{K_i}{F}{K_{i+1}}$), we will ultimately show the following (Theorems 3.6 and 3.12).

\begin{theorem}
    For all $i,j \in \Z$, there are $K_i - K_j$-bilinear isomorphisms 
    \[\mu_{ij}:T_{ij} \xrightarrow{\sim} \TT_{ij}\]
    compatible with the multiplication on $T$, giving an isomorphism of $\Z$-indexed algebras $\mu:T \xrightarrow{\sim} \TT$. Furthermore, for each $i, j \in \Z$ we have $K_i-K_j$-bilinear isomorphisms
    \[A_{ij} \cong \begin{cases}
    K_i \oplus F_{\tau_{i,i+2}} \oplus F_{\tau_{i,i+4}} \oplus \ldots \oplus F_{\tau_{ij}} & \text{ when } j-i \text{ even}, \\
    \bimod{K_i}{F}{K_j} \oplus F_{\tau_{i,i+3}} \oplus \ldots \oplus F_{\tau_{ij}} & \text{ when } j-i \text{ odd}.
    \end{cases}\]
\end{theorem}

\subsection{A $\Z$-indexed algebra isomorphic to $T$}
We begin by studying two-sided vector spaces similar to those in Theorem 3.1. Let $L$ be a field with subfields $k_{\alpha},k_{\omega}$. Given $L$-automorphisms $\rho_0$ and $\rho_1$, we define a two-sided $L$-vector space $\bimod{\rho_0}{L}{\rho_1}$ with underlying abelian group $L$ and scalar multiplication $a.x.b = \rho_0(a)\cdot x\cdot \rho_1(b)$, where $a,b,x \in L$ and $\cdot$ denotes the usual $L$-multiplication. We then restrict scalars to view $\bimod{\rho_0}{L}{\rho_1}$ as a two-sided $k_{\alpha}-k_{\omega}$-vector space.

\begin{lemma}
Let $L,k_{\alpha},k_{\omega},\rho_0,\rho_1$ be as above. Any field automorphism $f$ of $L$ gives a $k_{\alpha}-k_{\omega}$-bilinear isomorphism $\Phi_f:\bimod{\rho_0}{L}{\rho_1} \xrightarrow{\sim} \bimod{f\rho_0}{L}{f\rho_1}$.
\end{lemma}
\begin{proof}
For a field automorphism $f$ of $L$, we define $\Phi_f(x) = f(x)$ for $x \in \bimod{\rho_0}{L}{\rho_1}$. As $f$ is a field automorphism we only need to check that $\Phi_f$ preserves the $k_{\alpha}$ and $k_{\omega}$ actions. This is indeed the case, as for any $a \in k_{\alpha}$ and $b \in k_{\omega}$ we have
\[\Phi_f(a.x.b) = \Phi_f(\rho_0(a)\cdot x\cdot \rho_1(b)) = f\rho_0(a)\cdot f(x)\cdot f\rho_1(b) = f\rho_0(a)\cdot\Phi_f(x)\cdot f\rho_1(b).\]
\end{proof}

It follows that any such bimodule $\bimod{\rho_0}{L}{\rho_1}$ is isomorphic to one of the form $L_{\rho}=\bimod{1}{L}{\rho}$, where $\rho = \rho_0^{-1}\rho_1$. It is a standard result (see \cite[Chapter V, \S10.4. Corollaire]{bourbaki_algebra_2013}) that for a finite index Galois subfield $k$ of $L$ (a finite index subfield $k$ such that $L/k$ is Galois), the map 
\[x \otimes_k y \mapsto \bigoplus_{g \in \mathrm{Gal}(L/k)} x\cdot g(y)\] 
induces an isomorphism of $L$-algebras 
\[\mu:L \otimes_k L \xrightarrow{\sim} \displaystyle\bigoplus_{g \in \mathrm{Gal}(L/k)}\bimod{1}{L}{g}.\] 
Given subfields $k_{\alpha},k_{\omega} \subset L$, restricting scalars then determines a $k_{\alpha}-k_{\omega}$-bilinear isomorphism
\[ \mu^{L;k_{\alpha},k_{\omega}}_{k}:\bimod{k_{\alpha}}{L}{k} \otimes_{k} \bimod{k}{L}{k_{\omega}} \xrightarrow{\sim} \bigoplus_{g \in \mathrm{Gal}(L/k)}{}_{k_{\alpha}}(\bimod{1}{L}{g})_{k_{\omega}}. \]
We will commonly refer to this isomorphism as $\mu_k$ when $k_{\alpha},k_{\omega}$ and $L$ are fixed.

\begin{lemma}
    For $\rho_0,\rho_1 \in \Aut(L)$, $\mu_{k}^{L;k_{\alpha},k_{\omega}}$ induces a $k_{\alpha}-k_{\omega}$-bilinear isomorphism
    \[ \mu_{k}^{L;k_{\alpha},k_{\omega}}:L_{\rho_0} \otimes_{k} L_{\rho_1} \xrightarrow{\sim} \bigoplus_{g \in \mathrm{Gal}(L/k)} L_{\rho_0 g \rho_1}, \]
    defined by
    \[ x \otimes_k y \mapsto  \bigoplus_{g \in \mathrm{Gal}(L/k)} x\cdot \rho_0 g(y) \]
    for $x \in L_{\rho_0}$ and $y \in L_{\rho_1}$.
\end{lemma}
\begin{proof}
    We know $\mu_{k}^{L;k_{\alpha},k_{\omega}}$ gives an isomorphism as left $k_{\alpha}$-modules, so we only need to check that the right multiplication matches. For $x \otimes_k y \in L_{\rho_0} \otimes_k L_{\rho_1}$ and $b \in k_{\omega}$,
    \[ \mu_k((x \otimes_{k} y)b) = \mu_k(x\otimes_k (y\cdot \rho_1(b))) = \bigoplus_{g \in \mathrm{Gal}(L/k)} x(g(y \cdot\rho_1(b))). \]
    As $x \in F_{\rho_0}$, $x(g(y\cdot \rho_1(b))) = x\cdot \rho_0g(y \cdot \rho_1(b))$, and so
    \[ \mu_k((x \otimes_{k} y)b) = \bigoplus_{g \in \mathrm{Gal}(L/k)} x \cdot \rho_0 g(y) \cdot \rho_0 g\rho_1(b) = \mu_k(x \otimes_k y)b. \]
\end{proof}

\begin{lemma}
    Let $k_1,k_2 \subset L$ be finite index Galois subfields with Galois groups $G_1$ and $G_2$. For $\rho_0,\rho_1,\rho_2 \in \Aut(L)$ and any $x \in L_{\rho_0} \otimes_{k_1} L_{\rho_1} \otimes_{k_2} L_{\rho_2}$, 
    \[ \mu_{k_2}^{L;k_{\alpha},k_{\omega}} \circ (\mu_{k_1}^{L;k_{\alpha},k_2} \otimes_{k_2} 1)(x) = \mu_{k_1}^{L;k_{\alpha},k_{\omega}} \circ (1 \otimes_{k_1} \mu_{k_2}^{L;k_1,k_{\omega}})(x). \]
\end{lemma}
\begin{proof}
    Take $x = x_0 \otimes_{k_1} x_1 \otimes_{k_2} x_2 \in L_{\rho_0} \otimes_{k_1} L_{\rho_1} \otimes_{k_2} L_{\rho_2}$. Then
    \begin{align*}
        \mu_{k_2}^{L;k_{\alpha},k_{\omega}} \circ (\mu_{k_1}^{L;k_{\alpha},k_2} \otimes_{k_2} 1)(x) &= \bigoplus_{g \in G_1}\mu_{k_2}^{L;k_{\alpha},k_{\omega}}(x_0 \cdot \rho_0g(x_1) \otimes_{k_2} x_2) \\
        &= \bigoplus_{g \in G_1}\bigoplus_{h \in G_2} x_0 \cdot \rho_0g(x_1) \cdot \rho_0g\rho_1h(x_2) \\
        &= \bigoplus_{h \in G_2}\bigoplus_{g \in G_1} x_0 \cdot \rho_0g(x_1) \cdot \rho_0g\rho_1h(x_2) \\
        &= \bigoplus_{h \in G_2} \mu_{k_1}^{L;k_{\alpha},k_{\omega}}(x_0 \otimes_{k_1} x_1 \cdot\rho_1h(x_2)) \\
        &= \mu_{k_1}^{L;k_{\alpha},k_{\omega}} \circ (1 \otimes_{k_1} \mu_{k_2}^{L;k_1,k_{\omega}})(x).
    \end{align*}  
\end{proof}

In the future we will drop the $k$ from tensor subscripts, retaining only the index. We now let $\underline{k} = (k_1, \ldots, k_n)$ be an ordered tuple of finite index Galois subfields of $L$, and again let $k_{\alpha}$ and $k_{\omega}$ be arbitrary subfields of $L$. We denote the Galois groups by $G_i := \mathrm{Gal}(L/k_i)$, and let $G_{\underline{k}} := G_1 \times \ldots \times G_n$. Given an ordered tuple of $L$-automorphisms $\underline{\rho} = (\rho_0,\rho_1,\ldots,\rho_n)$, we define the $k_{\alpha}-k_{\omega}$-bimodule $L(\underline{k},\underline{\rho}) := L_{\rho_0} \otimes_{1} L_{\rho_1} \otimes_{2} \ldots \otimes_{n} L_{\rho_n}$. We also use $L(\underline{k})$ to denote $L \otimes_{1}L \otimes_2 \ldots \otimes_n L$. Repeatedly applying the isomorphisms of Lemma 3.3 gives a $k_{\alpha}-k_{\omega}$-bilinear isomorphism 
\begin{equation}
    \mu_{\underline{k}}^{k_{\alpha},k_{\omega}}:L(\underline{k},\underline{\rho}) \xrightarrow{\sim} \bigoplus_{(g_1,\ldots,g_n) \in G_{\underline{k}}} L_{\rho_0g_1\rho_1\ldots g_n\rho_n},
\end{equation} 
where $\mu_{\underline{k}}^{k_{\alpha},k_{\omega}} = \mu_{k_n}^{L;k_{\alpha},k_{\omega}} \circ (\mu_{k_{n-1}}^{L;k_{\alpha},k_n} \otimes_{n} 1) \circ \ldots \circ (\mu_{k_1}^{L;k_{\alpha},k_2} \otimes_{2} 1 \otimes_3 \ldots \otimes_{n}1)$. We will abuse notation and write this isomorphism as $\mu_{\underline{k}} = \mu_{k_n} \circ \mu_{k_{n-1}} \circ \ldots \circ \mu_{k_1}$. For any permutation $\sigma \in S_n$, where $S_n$ is the symmetric group on $n$ elements, we similarly define $\mu_{\underline{k};\sigma}^{k_{\alpha},k_{\omega}} := \mu_{k_{\sigma(n)}} \circ \mu_{k_{\sigma(n-1)}} \circ \ldots \circ \mu_{k_{\sigma(1)}}$. Thus $\mu_{\underline{k}} = \mu_{\underline{k};e}$, where $e$ is the identity in $S_n$.

\begin{propn}
    Let $\underline{k}=(k_1,\ldots,k_n)$, $k_{\alpha}$, $k_{\omega}$ and $\underline{\rho}=(\rho_0,\ldots,\rho_n)$ be as above, with $n \geq 2$. Then
    \[ \mu_{\underline{k};\sigma}^{k_{\alpha},k_{\omega}}(x) = \bigoplus_{(g_1,\ldots,g_n) \in G_{\underline{k}}} x_0 \prod_{i=1}^n \rho_0g_1\ldots \rho_{i-1}g_i(x_i) \]
    for every $x = x_0 \otimes_1 x_1 \otimes_2 \ldots \otimes_n x_n \in L(\underline{k},\underline{\rho})$ and every $\sigma \in S_n$. 
\end{propn}
\begin{proof}
    We induct on $n$, with the base case $n=2$ shown in Lemma 3.4. Suppose now that $n>2$ and that $\sigma(n) = r$. For the moment we suppose $r\neq 1,n$, as we will address these cases at the end. We let $\underline{k}' = (k_1,\ldots,k_{r-1})$ and $\underline{k}'' = (k_{r+1},\ldots,k_n)$, so that $G_{\underline{k}} = G_{\underline{k}'} \times G_r \times G_{\underline{k}''}$. For brevity we will denote $(g_1,\ldots,g_{r-1},g_{r+1},\ldots,g_n) \in G_{\underline{k}'} \times G_{\underline{k}''}$ by $(g',g'') \in G' \times G''$. As $\underline{k}'$ is an $m$-tuple for some $m<n$, and similarly for $\underline{k}''$, so the inductive hypothesis tells us
    \[\mu_{\underline{k};\sigma}^{k_{\alpha},k_{\omega}}(x) = \mu_{k_r}^{L;k_{\alpha},k_{\omega}} \circ \big(\mu_{\underline{k}'}^{k_{\alpha},k_r} \otimes_{r} \mu_{\underline{k}''}^{k_r,k_{\omega}}\big)\big((x_0 \otimes_1 \ldots \otimes_{r-1} x_{r-1}) \otimes_r (x_r \otimes_{r+1} \ldots \otimes_n x_n)\big),\]
    and that
    \begin{align*}
        &\phantom{=}\big(\mu_{\underline{k}'}^{k_{\alpha},k_r} \otimes_{r} \mu_{\underline{k}''}^{k_r,k_{\omega}}\big)\big((x_0 \otimes_1 \ldots \otimes_{r-1} x_{r-1}) \otimes_r (x_r \otimes_{r+1} \ldots \otimes_n x_n)\big) \\
        &= \biggl(\bigoplus_{g' \in G'}  x_0 \prod_{i=1}^{r-1} \rho_0g_1\ldots \rho_{i-1}g_i(x_i) \biggr) \otimes_r \biggl( \bigoplus_{g'' \in G''} x_r \prod_{j=r+1}^{n} \rho_rg_{r+1}\ldots \rho_{j-1}g_j(x_j) \biggr) \\
        &= \bigoplus_{(g',g'') \in G' \times G''}  \biggl( x_0 \prod_{i=1}^{r-1} \rho_0g_1\ldots \rho_{i-1}g_i(x_i) \biggr) \otimes_r \biggl( x_r \prod_{j=r+1}^{n} \rho_rg_{r+1}\ldots \rho_{j-1}g_j(x_j)\biggr).
    \end{align*} 
    Putting this together, we have
    \begin{align*}
        &\phantom{=} ~~\mu_{\underline{k};\sigma}^{k_{\alpha},k_{\omega}}(x) \\
        &= \mu_{k_r}^{L;k_{\alpha},k_{\omega}} \circ \big(\mu_{\underline{k}'}^{k_{\alpha},k_r} \otimes_{r} \mu_{\underline{k}''}^{k_r,k_{\omega}}\big)\big((x_0 \otimes_1 \ldots \otimes_{r-1} x_{r-1}) \otimes_r (x_r \otimes_{r+1} \ldots \otimes_n x_n)\big) \\
        &= \bigoplus_{(g',g'') \in G' \times G''} \bigoplus_{g_r \in G_r} \biggl( x_0 \prod_{i=1}^{r-1} \rho_0g_1\ldots \rho_{i-1}g_i(x_i) \biggr)\cdot \rho_0g_1\ldots \rho_{r-1}g_r \biggl( x_r \prod_{j=r+1}^{n} \rho_rg_{r+1}\ldots \rho_{j-1}g_j(x_j) \biggr) \\
        &= \bigoplus_{(g',g_r,g'') \in G_{\underline{k}}} x_0 \prod_{i=1}^{n} \rho_0g_1\ldots \rho_{i-1}g_i(x_i).
    \end{align*}
    We now consider the cases where $r=1,n$. The two cases are similar, so we only show for $r=1$. In this case $\underline{k}'$ is empty, and so $G_{\underline{k}}=G_1 \times G_{\underline{k}''}$. A near identical argument as above applies, where we now have
    \begin{align*}
        \mu_{k_1}^{L;k_{\alpha},k_{\omega}} \circ \big(1 \otimes_{1} \mu_{\underline{k}''}^{k_1,k_{\omega}}\big)\big(x_0  \otimes_1 (x_1 \otimes_{2} \ldots \otimes_n x_n)\big) &= \bigoplus_{g'' \in G''} \bigoplus_{g_1 \in G_1} x_0 \cdot \rho_0g_1\biggl( x_1 \prod_{j=2}^{n} \rho_1g_{2}\ldots \rho_{j-1}g_j(x_j) \biggr) \\
        &= \bigoplus_{(g_1,g'') \in G_{\underline{k}}} x_0 \prod_{i=1}^{n} \rho_0g_1\ldots \rho_{i-1}g_i(x_i).
    \end{align*}
\end{proof}

We now want to apply the previous results to the tensor algebra $T(\bimod{K_0}{F}{K_1})$ of a simple, rank 2 two-sided vector space $\bimod{K_0}{F}{K_1}$. For $i,j \in \Z$ with $i<j$ we let $\underline{K}_{ij} = (K_{i+1},K_{i+2},\ldots,K_{j-1})$, where again our subscripts are taken modulo 2. For $j-i>1$, (3.1) shows we have $K_i-K_j$-bilinear isomorphisms
\[\mu_{ij}:=\mu_{\underline{K}_{ij}}^{K_i,K_j}:T_{ij} = F(\underline{K}_{ij}) \xrightarrow{\sim} \displaystyle\bigoplus_{s \in G_{ij}} F_s,\]
where now $G_{ij} = \{1, \tau_{i+1}\} \times \{ 1, \tau_{i+2}\} \times \ldots \times \{1, \tau_{j-1} \}$ and $F_s = F_{\overline{s}}$. We now will set $\mu_{ij} := id_{T_{ij}}$ when $j\leq i+1$, which allows us to define a $\diag{T}$-bimodule $\TT := \mu(T) = \bigoplus_{i,j \in \Z} \mu_{ij}(T_{ij})$. Explicitly, we have 
\[\TT_{ij} = \begin{cases}
\{ 0 \} & \text{ if }j < i,\\
K_i & \text{ if }j=i, \\
\bimod{K_i}{F}{K_{i+1}} & \text{ if } j=i+1, \\
\displaystyle\bigoplus_{s \in G_{ij}}F_s & \text{ if }j>i+1.
\end{cases}\]

For $i<j<l$, $\TT_{ij} \otimes_j \TT_{jl}$ decomposes as a direct sum 
\[ \TT_{ij} \otimes_j \TT_{jl} = \bigg( \bigoplus_{s \in G_{ij}} F_s \bigg) \otimes_j \bigg( \bigoplus_{s' \in G_{jl}} F_{s'} \bigg) = \bigoplus_{(s,s') \in G_{ij} \times G_{jl}} F_s \otimes_j F_{s'}. \] 
We will again abuse notation by writing $\mu_{K_j}^{F;K_i,K_l}(\TT_{ij} \otimes_j \TT_{jl})$ to mean \break $\bigoplus_{(s,s') \in G_{ij} \times G_{jl}} \mu_{K_j}^{F;K_i,K_l}(F_s \otimes_j F_{s'})$, noting that $G_{ij}$ (resp. $G_{jl}$) is empty if $j= i+1$ (resp. $l = j+1$). 

\begin{theorem}
    Define a multiplication $\ast$ on $\TT$ where, for $a \in \TT_{ij}$ and $b \in \TT_{rl}$,
    \[a \ast b = \begin{cases} 
        ab & \text{ if }j=r \in \{i,l\}, \\
        \mu_{K_j}^{F;K_i,K_l}(a \otimes_j b), & \text{ if }j=r\not\in \{i,l\}, \\
        0, & \text{ if }j \neq r.       
    \end{cases}\]
    Then $\TT$ is a positively $\Z$-indexed algebra, and $T \cong \TT$ as $\Z$-indexed algebras.
\end{theorem}
\begin{proof}
    By construction we have that $\TT_{ij}$ is isomorphic to $T_{ij}$ for all $i,j \in \Z$. As $T$ is a positively $\Z$-indexed algebra, the same is true of $\TT$ if
    \[ \mu_{ij}(x) \ast \mu_{jl}(y) = \mu_{il}(x \otimes_j y) \]
    for all $x \in T_{ij}$ and $y \in T_{jl}$. This is clear when $j\neq r$, and when $j=r\in \{i,l\}$. When $j=r\not\in \{i,l\}$ this follows from Proposition 3.5, as
    \[ \mu_{ij}(x) \ast \mu_{jl}(y) = \mu_{K_j}^{F;K_i,K_l} \circ \big(\mu_{ij} \otimes_j \mu_{jl}\big)\big(x \otimes_j y\big) = \mu_{il}(x \otimes_j y). \]
\end{proof}

We note that as $G_j = \{1,\tau_j\}$, the multiplication on $\TT$ may be described as
\[a_s \ast b_{s'} = a_s \overline{s}(b_{s'}) \oplus a_s \overline{s}\tau_j(b_{s'}) \in F_{(s,1,s')} \oplus F_{(s,\tau_j,s')} \subset \TT_{il}\] 
for $i<j<l$, any $s \in G_{ij}$, $s' \in G_{jl}$, and any $a_s \in F_s \subset \TT_{ij}$, $b_{s'} \in F_{s'} \subset \TT_{jl}$. We will frequently use this description of the multiplication on $\TT$ throughout.

As a reminder from Section 2.3, $Q_i$ is the two-sided $K_{i}$-subspace of $T_{i,i+2}$ given by the image of the unit morphism $K_{i} \hookrightarrow T_{i,i+2}$, and is generated by the element $h_{i}$. Applying $\mu_{i,i+2}$ to $h_{i}$ gives the following.

\begin{cor}
Let $\cR$ be the ideal of $\TT$ generated in degree 2 by the elements $h_i':=(2, 0) \in F_1 \oplus F_{\tau_{i+1}} = \TT_{i,i+2}$. Then the noncommutative symmetric algebra $A$ is isomorphic to $\TT/\cR$.
\end{cor}
\begin{proof}
    Recall once again from Section 2.3 that the relations of $A$ are generated by elements $h_i=1 \otimes_{i+1} 1 + w_{i+1}^{-1} \otimes_{i+1} w_{i+1} \in T_{i,i+2}$, where $w_{i+1} \in F$ is such that $\tau_{i+1}(w_{i+1}) = -w_{i+1}$. Applying $\mu$ then gives
    \[ \mu_{i,i+2}(h_i) = (1 + w_{i+1}^{-1}w_{i+1}, 1 + w_{i+1}^{-1}\tau_{i+1}(w_{i+1})) = (2, 1 - w_{i+1}^{-1}w_{i+1}) = (2,0). \]
\end{proof}

We also mention that the elements of the normal family $g = \{g_i \in A_{i,i+2}\}_{i \in \Z}$ have images $g_i' := \mu_{i,i+2}(g_i) = (2w_{i+1},0) \in F \oplus F_{\tau_{i+1}}$.

\subsection{A bimodule decomposition of $\bbS^{nc}(F)$}
Theorem 3.6 gives a decomposition of the components $T_{ij}$ of the tensor algebra into a direct sum of irreducible subbimodules. We now aim to give a similar description of the quotient $\A := \TT/\cR$ defined in Corollary 3.7, which is the image of the noncommutative symmetric algebra $A$ under the isomorphism $\mu$.

For $l \in \Z$, we let $\TT_1^{(l)}$ denote the $K_{l}-K_{l+2}$-subbimodule $F_1$ of $\TT_{l,l+2}$. As $\TT_1^{(l)}$ has a simultaneous left and right $K_{l}$-basis $\{ h_l',g_l'\}$, we see $\TT_1^{(l)} \cong (K_{l})^2$. For $i<l<j-2$ and $s=(s_{i+1},\ldots,s_{j-1}) \in G_{ij}$ with $s_{l+1} = 1$, we set $s' = (s_{i+1},\ldots,s_{l})$ and $s'' = (s_{l+2},\ldots,s_{j-1})$. We then have
\begin{equation}
    \resizebox{.9\hsize}{!}{$F_{s'} \ast \TT_1^{(l)} \ast F_{s''} \subseteq F_{(s',1,1,1,s'')} \oplus F_{(s',\tau_{l},1,1,s'')} \oplus F_{(s',1,1,\tau_{l+2},s'')} \oplus F_{(s',\tau_{l},1,\tau_{l+2},s'')}$}.
\end{equation}
When $l=i$ or $l=j-2$, we instead have
\begin{equation}
    \TT_1^{(i)} \ast F_{s''} \subseteq F_{(1,1,s'')} \oplus F_{(1,\tau_{i+2},s'')}
\end{equation}
and
\begin{equation}
    F_{s'} \ast \TT_1^{(j-2)} \subseteq F_{(s',1,1)} \oplus F_{(s',\tau_{j-2},1)},
\end{equation}
respectively. 

\begin{lemma}
    The inclusions (3.2), (3.3) and (3.4) are equalities.
\end{lemma}
\begin{proof}
    It suffices to show that both sides of each inclusion have the same left and right dimensions. We first suppose $i<l<j-2$. By the definition of the multiplication on $\TT$ we have an isomorphism of $K_i-K_j$-bimodules 
    \[ \mu_{ij} \circ (\mu_{il}^{-1} \otimes_l \mu_{l,l+2}^{-1} \otimes_{l+2} \mu_{l+2,j}^{-1}): F_{s'} \otimes_{l} \TT_1^{(l)} \otimes_{l+2} F_{s''} \xrightarrow{\sim} F_{s'} \ast \TT_1^{(l)} \ast F_{s''}, \] and we know $\TT_1^{(l)}$ is isomorphic to $(K_{l})^2$, so 
    \[F_{s'} \ast \TT_1^{(l)} \ast F_{s''} \cong \left( F_{s'} \otimes_{l} F_{s''}\right)^{\oplus 2}.\] 
    Now $F_{s'} \otimes_{l} F_{s''}$ has left and right dimension 4, so it follows that the LHS of (3.2) has left and right dimension 8. The RHS of (3.2) also has left and right dimension 8, so (3.2) is an equality. Similar arguments show that both sides of (3.3) and (3.4) have left and right dimension 4, and so (3.3) and (3.4) must also be equalities.
\end{proof}

We now let $f_{\cR,i}(n) = \dim_{K_i} \cR_{i,i+n}$ denote the Hilbert function of $e_i\cR$. We will also use the same notation $f_{\TT,i}(n)$ and $f_{\A,i}(n)$ for the Hilbert functions of $e_i\TT$ and $e_i\A$, respectively. Clearly $f_{\TT,i}(n)=2^n$, and it is known that $f_{\A,i}(n)=n+1$ \cite[Lemma 3.6(2)]{chan_species_2016}. We give an independent proof that $f_{\A,i}(n)=n+1$ by showing $f_{\cR,i}(n) = 2^n-n-1$ (the $n$-th Eulerian number $E(n,1)$ \cite{OEIS}), as $f_{\A,i}(n) = f_{\TT,i}(n) - f_{\cR,i}(n)$. We do so by examining the intersection of the images of the two-sided subspaces $\QQ_i := \mu_{i,i+2}(Q_i)$ inside the relations $\cR_{ij}$. For $j-i \geq 2$ and $i \leq l \leq j-2$, we let $\cR_{ij}^{(l)} = \TT_{il} \ast \QQ_l \ast \TT_{l+2,j} \subset \cR_{ij}$. As the $K_l-K_{l+2}$-bimodule $\QQ_l$ has a complement $ \QQ_l^{\perp} := K_l (g_l') K_{l+2} \oplus F_{\tau_{l+1}}$ in $\TT_{l,l+2}$, $\cR_{ij}^{(l)}$ has a complement $\cR_{ij}^{(l)\perp} := \TT_{il} \ast \QQ_l^{\perp} \ast \TT_{l+2,j}$ in $\TT_{ij}$.

\begin{lemma}
For $j-i \geq 3$ and $i \leq l < l' \leq j$,
\[\cR_{ij}^{(l)} \cap \cR_{ij}^{(l')} = \begin{cases}
    \{ 0 \} & \text{ if } l'-l = 1, \\
    \TT_{il} \ast \QQ_l \ast \TT_{l+2,l'} \ast \QQ_{l'} \ast \TT_{l'+2,j} & \text{ otherwise}.
\end{cases}\]
\end{lemma}
\begin{proof}
We first suppose $l'-l>1$, in which case we write $\TT_{ij}$ as $\TT_{i,l+2} \ast \TT_{l+2,l'} \ast \TT_{l'j}$. We then have $\cR_{ij}^{(l)} = \cR_{i,l+2}^{(l)} \ast \TT_{l+2,l'} \ast \TT_{l'j}$ and $\cR_{ij}^{(l')} = \TT_{i,l+2} \ast \TT_{l+2,l'} \ast \cR_{l'j}^{(l')}$. Then
\[\TT_{ij} = \TT_{i,l+2} \ast \TT_{l+2,l'} \ast \TT_{l'j} = \big(\cR_{i,l+2}^{(l)}\oplus \cR_{i,l+2}^{(l)\perp}\big) \ast \TT_{l+2,l'} \ast \big(\cR_{l'j}^{(l')} \oplus \cR_{l'j}^{(l')\perp}\big),\]
and hence
\[\cR_{ij}^{(l)} = \Big( \cR_{i,l+2}^{(l)} \ast \TT_{l+2,l'} \ast \cR_{l'j}^{(l')}\Big) \oplus \Big( \cR_{i,l+2}^{(l)} \ast \TT_{l+2,l'} \ast \cR_{l'j}^{(l') \perp}\Big),\]
\[\cR_{ij}^{(l')} = \Big( \cR_{i,l+2}^{(l)} \ast \TT_{l+2,l'} \ast \cR_{l'j}^{(l')}\Big) \oplus \Big( \cR_{i,l+2}^{(l)\perp} \ast \TT_{l+2,l'} \ast \cR_{l'j}^{(l')}\Big).\]
It is then clear that
\[\cR_{ij}^{(l)} \cap \cR_{ij}^{(l')} = \cR_{i,l+2}^{(l)} \ast \TT_{l+2,l'} \ast \cR_{l'j}^{(l')} = \TT_{il} \ast \QQ_l \ast \TT_{l+2,l'} \ast \QQ_{l'} \ast \TT_{l'+2,j}.\]
Now if $l'=l+1$ we may suppose $j-i=3$ without loss of generality, as 
\[\cR_{ij}^{(l)} \cap \cR_{ij}^{(l+1)} = \TT_{il} \ast (\QQ_l \ast \TT_{l+2,l+3} \cap \TT_{l,l+1} \ast \QQ_{l+1}) \ast \TT_{l+3,j}.\] 
Then $\QQ_i \ast \TT_{i+2,i+3} \subset F_{(1,1)} \oplus F_{(1, \tau_{i+2})}$ and $\TT_{i,i+1} \ast \QQ_{i+1} \subset F_{(1,1)} \oplus F_{(\tau_{i+1}, 1)}$, so we see 
\[\QQ_{i} \ast \TT_{i+2,i+3} \cap \TT_{i,i+1} \ast \QQ_{i+1} \subset F_{(1,1)}.\] 
Suppose such an element were to exist, so that $h_{i}' \ast a = b \ast h_{i+1}'$ for some $a \in \TT_{i+2,i+3}$ and $b \in \TT_{i,i+1}$. As $h_{i}' \ast a = 2a \oplus 2\tau_{i+2}(a) \in F_{(1,1)} \oplus F_{(1,\tau_{i+2})}$, we must have $\tau_{i+2}(a)=0$. It follows that $a=0$, and a similar argument shows $b = 0$, so $\QQ_{i} \ast \TT_{i+2,i+3} \cap \TT_{i,i+1} \ast \QQ_{i+1} = \{ 0 \}$. 
\end{proof}

\begin{propn}
    For $n \geq 0$, $f_{\cR,i}(n) = 2^n-n-1$.
\end{propn}
\begin{proof}
    This is trivial for $n< 2$. We proceed by induction on $n$, supposing now that $n \geq 2$ and $f_{\cR,i}(m) = 2^{m}-m-1$ for all $m<n$. We also let $j=i+n$. In the notation of Lemma 3.9 we have 
    \[\cR_{ij} = \sum_{l=i}^{j-2} \cR_{ij}^{(l)} = (\cR_{i,j-1} \ast \TT_{j-1,j}) + \cR_{ij}^{(j-2)}, \] 
    and
    \[\dim_{K_i} \cR_{ij}^{(l)} = \dim_{K_i} \TT_{il} \ast \QQ_{l} \ast \TT_{l+2,j} = ( \dim_{K_i} \TT_{il} ) ( \dim_{K_{l+2}} \TT_{l+2,j} ) = 2^{n-2}.\]
    It follows from Lemma 3.9 that $\cR_{i,j-1} \ast \TT_{j-1,j} \cap \cR_{ij}^{(j-2)} = \cR_{i,j-2} \ast \QQ_{j-2}$ (this is also \cite[Lemma 3.6(1)]{chan_species_2016}), so
    \begin{align*}
        f_{\cR,i}(n) &= \dim_{K_i} (\cR_{i,j-1} \ast \TT_{j-1,j}) + \dim_{K_i} (\cR_{ij}^{(j-2)}) - \dim_{K_i} (\cR_{i,j-2} \ast \QQ_{j-2}) \\
        &= 2f_{\cR,i}(n-1) + 2^{n-2} - f_{\cR,i}(n-2).
    \end{align*}
    We then apply the inductive hypothesis to see
    \[ f_{\cR,i}(n) = 2(2^{n-1}-n) + 2^{n-2} - (2^{n-2}-n+1) = 2^n - n-1. \]
\end{proof}

\begin{remark}
    Using Lemma 3.9 we may compute $f_{\cR,i}(n)$ directly, which provides a new proof of a known formula for the Eulerian numbers $E(n,1)$. A non-empty $m$-fold intersection $\cR_{i,i+n}^{(l_1)} \cap \ldots \cap \cR_{i,i+n}^{(l_m)}$ has dimension $2^{n-2m}$. For such an intersection we define a binary sequence $(c_q)_{q=1}^{n-1}$ of length $n-1$, where 
    \[c_q = \begin{cases}
    0, & q \not\in \{ l_1, \ldots, l_m\}, \\
    1, & q \in \{ l_1, \ldots, l_m\}.
    \end{cases}\] 
    Lemma 3.9 then tells us that if the intersection is non-empty, there is no $q$ for which $c_q=c_{q+1} = 1$. So counting non-empty $m$-fold intersections is the same as counting binary strings of length $n-1$ with $m$ nonconsecutive 1's, of which there are $\binom{n-m}{m}$ such strings \cite{muir_combinations_1904}. We then use inclusion-exclusion to compute 
    \begin{align}
        E(n,1) = f_{\cR,i}(n) =  \sum_{m=1}^{\lfloor n/2 \rfloor} (-1)^{m-1} \binom{n-m}{m}2^{n-2m}.
    \end{align}
    After rearranging and re-indexing, we recover the known formula (see \cite{OEIS})
    \[ E(n,1) = \sum_{m=0}^{\lfloor (n-1)/2 \rfloor} (-1/2)^{m} \binom{n-m+1}{m-1}2^{n-m-2}. \]
\end{remark}

We now let $\mathcal{I}$ denote the ideal of $\TT$ generated by $\TT_1^{(l)}$ for $l \in \Z$. As $\TT_1^{(l)}$ is the span of the elements $h_l'$ and $g_l'$, the quotient $\TT/\mathcal{I}$ is the image of $B=A/gA$ under the isomorphism $\mu: T \xrightarrow{\sim} \TT$. Lemma 3.8 shows that, for $i\leq l \leq j-2$, we have
\[\TT_{il}\ast \TT_1^{(l)} \ast \TT_{l+2,j} = \displaystyle\bigoplus_{\substack{s \in G_{ij} \\ s_{l+1}=1}}F_{s},\]
so we see $\mathcal{I}_{ij} = \bigoplus_{\substack{s \in G_{ij}\setminus \{\tau_{ij}\}}} F_s$, recalling that $\tau_{ij}=(\tau_{i+1},\tau_{i+2},\ldots,\tau_{j-1}) \in G_{ij}$ is the unique element $s \in G_{ij}$ with $s_{l+1}\neq 1$ for all $i\leq l\leq j-2$.

\begin{theorem}
    For all $i\leq j$, there are $K_i-K_j$-bilinear isomorphisms 
    \[\A_{ij} \cong \begin{cases}
    K_i \oplus F_{\tau_{i,i+2}} \oplus F_{\tau_{i,i+4}} \oplus \ldots \oplus F_{\tau_{ij}} & \text{ when } j-i \text{ even} \\
    \bimod{K_i}{F}{K_{i+1}} \oplus F_{\tau_{i,i+3}} \oplus \ldots \oplus F_{\tau_{ij}} & \text{ when } j-i \text{ odd}.
    \end{cases}\]
\end{theorem}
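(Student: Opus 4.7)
The strategy is to work in the realization $T(F)_{ij} = \bigoplus_{s \in H_{ij}} F_{\overline{s}}$ provided by Theorem 3.4, identifying $\bbS^{nc}(F)_{ij}$ with $\TT_{ij}/I_{ij}$, where $I$ is the two-sided ideal of $T(F)$ generated by the defining degree-two relations of the noncommutative symmetric algebra. The plan is to induct on $j - i$, using an explicit description of the relations to reduce an arbitrary element of $\TT_{ij}$ modulo $I_{ij}$ to a combination of the claimed $F_{\tau_{ij}^m}$'s, and then to promote spanning to a direct sum via a dimension comparison.

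The preliminary step is to pin down the relation subbimodule $R_k \subset \TT_{k,k+2} = F \oplus F_{\tau_{k+1}}$. A Hilbert-series count, using $\dim_{K_k}\bbS^{nc}(F)_{k,k+2} = 3$ and $\dim_{K_k}\TT_{k,k+2} = 4$, forces $R_k$ to have left $K_k$-dimension one. Using the coevaluation description of the relations of $\bbS^{nc}$, I would verify that a generator of $R_k$ has nonzero components in both the $F$ and $F_{\tau_{k+1}}$ summands, so that modulo $R_k$ the summand indexed by $1 \in G_{k+1}$ is a scalar-twisted copy of the summand indexed by $\tau_{k+1}$. For the inductive step, base cases $j - i \leq 1$ being immediate, I would use the surjection
\[
\bbS^{nc}(F)_{i, j-1} \otimes_{K_{j-1}} F \twoheadrightarrow \bbS^{nc}(F)_{ij}
\]
coming from generation in degree one, whose kernel is the image of $\bbS^{nc}(F)_{i, j-2} \otimes_{K_{j-2}} R_{j-2}$ under multiplication. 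Applying the inductive hypothesis to $\bbS^{nc}(F)_{i, j-1}$, each summand $F_{\tau_{i, j-1}^m} \otimes_{K_{j-1}} F$ splits into its two $G_{j-1}$-isotypic components: the ``standard'' piece $F_{\tau_{ij}^m}$ and an ``extra'' piece indexed by the sequence $(\tau_{i+1}, \ldots, \tau_{i+m}, 1, \ldots, 1, \tau_{j-1}) \in H_{ij}$. The relation $R_{j-2}$, multiplied on the left by appropriate prefixes from $\bbS^{nc}(F)_{i, j-2}$, identifies each extra piece with a scalar-twisted copy of a standard $F_{\tau_{ij}^{m'}}$ in the quotient, leaving exactly the announced direct sum.

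The main technical obstacle will be the bookkeeping of Galois twists: one must verify that the twisting scalar defining the generator of $R_{j-2}$, when composed with the Galois element $\overline{\tau_{i, j-2}^{m''}} \in H$ accumulated from the prefix, identifies each extra summand with the correct $F_{\tau_{ij}^{m'}}$, and that the $F$ in the $m = 0$ slot collapses down to $K_i$ precisely when $j - i$ is even. The dihedral structure of $H = \langle \tau_0, \tau_1 \rangle$ is the underlying reason this bookkeeping closes up. Once the claimed summands are shown to span $\bbS^{nc}(F)_{ij}$, the $K_i$-dimensions of both sides agree, both equal to $j - i + 1$---the left side directly from the statement, the right side by the standard Hilbert-series formula for $\bbS^{nc}$ of a $(2,2)$-bimodule---so the natural $K_i$-$K_j$-bilinear surjection from the claimed direct sum onto $\bbS^{nc}(F)_{ij}$ is in fact an isomorphism.
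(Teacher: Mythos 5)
Your overall plan---pass to the $\TT$-realization from Theorem 3.4, induct on $j-i$, then close with a Hilbert-series comparison---is reasonable in outline (the dimension count at the end is essentially the shortcut the paper itself notes it could have used in place of its inclusion--exclusion argument), and it does differ from the paper's proof, which never leaves $\TT$ and instead builds explicit involutions $r_l$ (Proposition 3.8) with $a + r_l(a) \in J$ and descends on the combinatorial quantities $\alpha(s),\beta(s)$. But two of your concrete claims about the mechanism are false. First, the predicted form of the degree-two relation is wrong: under $\mu$ from Proposition 3.3, $h_{k+1} = 1\otimes 1 + w_{k+1}^{-1}\otimes w_{k+1}$ maps to $\bigl(1 + w_{k+1}^{-1}w_{k+1},\ 1 + w_{k+1}^{-1}\tau_{k+1}(w_{k+1})\bigr) = (2,0)$ because $\tau_{k+1}(w_{k+1}) = -w_{k+1}$; this is exactly Corollary 3.6. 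The generator of $R_k$ therefore has \emph{zero} component in $F_{\tau_{k+1}}$, so ``modulo $R_k$ the $F_1$ summand is a scalar-twisted copy of $F_{\tau_{k+1}}$'' fails. What really happens in degree $2$ is that a one-dimensional $K_k$-line inside $F_1$ dies, $F_{\tau_{k+1}}$ is untouched, and the quotient is $K_k \oplus F_{\tau_{k+1}}$.

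Second, the ``standard/extra'' bookkeeping in your inductive step has the roles reversed. The $m$ indexing the summands $F_{\tau_{i,j-1}^m}$ of $\bbS^{nc}(F)_{i,j-1}$ satisfies $m \equiv j-2-i \pmod 2$, so your ``standard'' piece $F_{\tau_{ij}^m}$ after tensoring has the \emph{wrong} parity of $m$ to appear in $\bbS^{nc}(F)_{ij}$. For instance with $j-i=4$ and $i=0$, $\bbS^{nc}(F)_{0,3}\otimes F \cong F_{(1,1,1)} \oplus F_{(1,1,\tau_3)} \oplus F_{(\tau_1,\tau_2,1)} \oplus F_{(\tau_1,\tau_2,\tau_3)}$: in the quotient it is your ``standards'' $F_{(1,1,1)}$ and $F_{(\tau_1,\tau_2,1)}$ that, respectively, collapse to $K_0$ and die, while your ``extras'' $F_{(1,1,\tau_3)}$ and $F_{(\tau_1,\tau_2,\tau_3)}$ are what survive (note $F_{(1,1,\tau_3)}\cong F_{\tau_{04}^1}$ since both are twisted by $\tau_1$). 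So ``identifies each extra piece with a scalar-twisted copy of a standard $F_{\tau_{ij}^{m'}}$, leaving exactly the announced direct sum'' cannot yield summands of the announced parity. Your strategy is likely repairable once the relation is replaced by $(2,0)$, the surviving pieces are correctly identified as the extras, and a citation is supplied for the Koszul exactness of $0 \to A_{i,j-2} \to A_{i,j-1} \otimes F \to A_{ij} \to 0$, which follows from $\bbS^{nc}(F)$ having global dimension $2$ \cite[Proposition 6.1]{chan_species_2016}; but as written it contains genuine gaps.
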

\begin{proof}
    We induct on $n=j-i$, with the cases $n=0,1$ being trivial. Suppose $n \geq 2$. As $\mathcal{I}_{ij} = \bigoplus_{\substack{s \in G_{ij}\setminus \{\tau_{ij}\}}} F_s$, the map $\TT_{ij} \to (\TT/\mathcal{I})_{ij}$ is the projection onto the summand $F_{\tau_{ij}}$. Thus $\TT_{ij} \cong \mathcal{I}_{ij} \oplus F_{\tau_{ij}}$, and as $\cR\subset \mathcal{I}$ we see $\A_{ij} \cong \mathcal{I}_{ij}/\cR_{ij} \oplus F_{\tau_{ij}}$. We claim $\mathcal{I}_{ij}/\cR_{ij} \cong \A_{i,j-2}$, from which the theorem will follow by the inductive hypothesis. Right multiplication by the element $g_{j-2}' \in \TT_1^{(j-2)}$ gives an inclusion $\iota_g:\A_{i,j-2}\hookrightarrow\A_{ij}$. Note that this is actually right $K_j$-linear, as $K_{j-2}=K_j$ and $ag_{j-2}'=g_{j-2}'a$ for all $a \in K_j$. The subbimodule $F_{\tau_{ij}}$ is contained in $\coker \iota_g$ as $F_s \ast g_{j-2}' \subset F_{(s,1,1)} \oplus F_{(s,\tau_{j-2},1)}$ for every $s \in G_{i,j-2}$, and so $\A_{i,j-2}g_{j-2}' \subset \mathcal{I}_{ij}/ \cR_{ij}$. By Proposition 3.10 we have 
    \[\dim_{K_i} (\A_{ij}) - \dim_{K_i} (\A_{i,j-2}g_{j-2}') = n+1 - (n-1) = 2 =\dim_{K_i} (F_{\tau_{ij}}).\]
    Thus $\coker \iota_g \cong F_{\tau_{ij}}$, and hence $\mathcal{I}_{ij}/\cR_{ij} \cong \A_{i,j-2}$. 
\end{proof}

\begin{remark}
    The construction of the $\Z$-indexed algebra $\TT$ is very general, hence we expect a similar decomposition should exist for other algebras constructed by way of noncommutative tensor/symmetric algebras. As such, this decomposition may be a useful tool with wider applications.
\end{remark}


\section{{$\Lambda_{00}$} is a noncommutative affine curve}
Normality of the family $g$ tells us that the set $\{ g_i^n = g_i g_{i+2} \ldots g_{i+2n} \mid i \in \Z, n \geq 0 \}$ satisfies the left and right Ore conditions, so we may invert the members of the family $g$ to obtain a two-sided $\Z$-indexed algebra of fractions $\Lambda$. The $\Z$-indexed algebra $\Lambda$ is \textit{strongly graded}, so that $\gr \Lambda$ is equivalent to the category $\mathsf{mod} \Lambda_{00}$ of finitely generated $\Lambda_{00}$-modules \cite[Lemma 10.13]{chan_species_2016}, and hence $\qgr \Lambda = \gr \Lambda$. As $B = A / gA$ is defined as the quotient of $A$ by a `homogeneous' ideal, we view the noncommutative Proj $\qgr B$ as a closed subscheme of $\PP^{nc}(F)$, with $\Lambda_{00}$ being the coordinate ring of the corresponding affine open complement. In this section we show that $\Lambda_{00}$ should be viewed as a noncommutative affine curve. In the commutative setting, this corresponds algebraically to the coordinate ring being a \textit{Dedekind domain}. As in \cite[\S 5.2.6]{mcconnell_noncommutative_2001}, we say a prime Goldie ring $R$ is an \textit{Asano order} if every non-zero two-sided ideal $I \subset R$ is invertible in the ring of fractions $Q(R)$.

\begin{defn}{\cite[\S5.2.10]{mcconnell_noncommutative_2001}}
    A (possibly noncommutative) ring $R$ is a \textit{Dedekind domain} if it is a hereditary noetherian domain and an Asano order.
\end{defn}

When $R$ is commutative, this definition coincides with the usual one. As with commutative Dedekind domains, noncommutative Dedekind domains have a number of equivalent characterisations. In particular, the following will be of use {\cite[Proposition 5.6.3]{mcconnell_noncommutative_2001}}: A hereditary noetherian domain $R$ is a Dedekind domain if and only if the only idempotent (two-sided) ideals in $R$ are $0$ and $R$ itself.

While any commutative Dedekind domain must have (Gelfand-Kirillov) dimension 1, the same is not true of noncommutative Dedekind domains. For example, the Weyl algebra $A_1(k) = k[x,\frac{d}{dx}]$ is a noncommutative Dedekind domain of dimension 2. Section 4 is devoted to showing the following (cf. Theorem 4.16 and Proposition 4.17).

\begin{theorem}
    $\Lambda_{00}$ is a Dedekind domain of dimension 1.
\end{theorem}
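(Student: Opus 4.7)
The plan is to check in turn the four features required of a Dedekind domain of Gelfand--Kirillov dimension one: that $\Lambda_{00}$ is a noetherian domain, that it is hereditary, that it admits no non-trivial idempotent two-sided ideals, and that $\mathrm{GKdim}\,\Lambda_{00} = 1$. Two of these are essentially inherited. The algebra $\bbS^{nc}(F)$ is a noetherian domain, and Ore localization at $g$ (a family of non-zero-divisors, by normality) preserves both properties; thus $\Lambda$ is a noetherian $\Z$-indexed domain, and the strong-grading equivalence $\gr \Lambda \simeq \fl\Lambda_{00}$ descends noetherianity to $\Lambda_{00}$, while the domain property passes by restriction inside $\Lambda$. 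The GK dimension computation follows from the Hilbert function $\dim_{K_i}\bbS^{nc}(F)_{i,i+n} = n+1$ supplied by Lemma 3.10 and Theorem 3.12, since inverting a homogeneous normal non-zero-divisor of positive degree drops the GK dimension by one.

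For hereditariness, I would use that the noncommutative projective line $\PP^{nc}(F) = \qgr \bbS^{nc}(F)$ has cohomological dimension one, so $\bbS^{nc}(F)$ has graded global dimension two, the extra dimension coming from the simple modules whose only graded extensions are detected by $\m$-torsion. After inverting $g$ the torsion category collapses, so the graded global dimension of $\Lambda$ drops to one; transferring along the strong-grading equivalence $\gr \Lambda \simeq \fl\Lambda_{00}$ then gives global dimension one for $\Lambda_{00}$.

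The main obstacle is the idempotent-ideal condition, which is the heart of Section 4.1. Here one uses the ideal correspondence to be established in Theorem 4.6 and Corollary 4.10: two-sided ideals of $\Lambda_{00}$ correspond to free rank-one two-sided ideals of $\bbS^{nc}(F)$, which are precisely the ideals generated by a normal family of elements. In particular every nonzero two-sided ideal of $\Lambda_{00}$ is principal, generated by a single normal element $x$. If such an ideal satisfies $(x)^2 = (x)$ then $x = x^2 y$ for some $y \in \Lambda_{00}$, and since $\Lambda_{00}$ is a domain this forces $xy = 1$, so $x$ is a unit and $(x) = \Lambda_{00}$. Combined with the hereditary and noetherian properties, the characterisation \cite[\S 5 Proposition 6.3]{mcconnell_noncommutative_2001} yields that $\Lambda_{00}$ is a Dedekind domain.

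The crux is therefore the ideal correspondence; once it is in hand the Dedekind property is almost automatic. Proving the correspondence in turn rests on the bimodule decomposition of $\bbS^{nc}(F)$ developed in Section 3 (Theorem 3.12 and Corollary 3.13), which gives explicit control over free rank-one subbimodules of $\bbS^{nc}(F)$ and identifies them with those generated by normal families. I expect the technical cost of the proof to be concentrated there, while the passage from the correspondence to the Dedekind conclusion is a short formal argument of the kind sketched above.
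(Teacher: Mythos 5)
Your overall scaffolding matches the paper's: noetherian $+$ domain are inherited, hereditariness transfers from $\coh\PP^{nc}(F)$ through the $g$-localization diagram and the strong-grading equivalence, and the remaining work concentrates on the idempotent-ideal condition. The noetherian/domain and hereditary parts are essentially as in the paper, and for GK dimension you arrive at the right answer by a different route (the paper uses the $g$-adic filtration, shows $\mathrm{gr}_g(\Lambda_{00})$ is a twisted ring isomorphic to $\mathfrak{b}$ with linear growth, and invokes Krause--Lenagan; your ``inverting a positive-degree normal element drops GK dimension by one'' is not a theorem as stated and conflates $\Lambda$ with $\Lambda_{00}$, though it can be made precise via the same filtration).

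The genuine gap is in the idempotent-ideal step, which is also where you locate the technical weight. You claim that the correspondence of Theorem 4.6/Corollary 4.10 shows ``every nonzero two-sided ideal of $\Lambda_{00}$ is principal, generated by a single normal element $x$,'' and then conclude from $(x)^2 = (x)$ and the domain property that $x$ is a unit. But Corollary 4.10 produces a normal family $(a_i \in A_{i,i+\delta})$ generating $\overline{I}$ as an ideal of $A = \bbS^{nc}(F)$; it does \emph{not} assert that the corresponding ideal $I \subset \Lambda_{00}$ is cyclic. That deduction goes through cleanly only when $\delta$ is even, in which case $a_0 g^{-\delta/2}$ is a normal generator of $I$; when $\delta$ is odd the lowest-filtered piece of $I$ has $K_0$-dimension $2$, and principality of $I$ is not obvious. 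Nothing in the correspondence forces $\delta$ to be even, so the claim needs an argument you have not supplied. The paper sidesteps this entirely: it stays in $A$, notes that $\overline{I}$ and $\overline{I}^2$ are \emph{both} generated by normal families (degrees $\delta$ and $2\delta$), hence free of rank one, so $0 \to \overline{I}^2 \to \overline{I} \to \overline{I}/\overline{I}^2 \to 0$ is a length-one projective resolution and $\mathrm{pd}\,\overline{I}/\overline{I}^2 \le 1$; then Lemma 4.14 forces $\overline{I}/\overline{I}^2$ to contain $\m$-torsion if $I$ were idempotent, which by \cite[Lemma 8.8]{chan_species_2016} would force projective dimension exactly $2$ --- a contradiction. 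If you want to keep your shortcut, you must either prove that the degree $\delta$ of the normal family associated to a two-sided ideal of $\Lambda_{00}$ is always even, or else adapt the cancellation argument to a non-principal $I$; otherwise the projective-dimension argument is the safer path.
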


As such, we are justified in calling $\Lambda_{00}$ a noncommutative affine curve. This result immediately allows us to characterise finitely-generated $\Lambda_{00}$-modules using the general ideal theory for Dedekind domains \cite[\S7]{mcconnell_noncommutative_2001}. In particular, this classifies finite-length $\Lambda_{00}$-modules, which correspond to torsion, $g$-torsion-free sheaves on $\PP^{nc}(F)$ \cite[Proposition 10.15]{chan_species_2016}.

\begin{remark}
    Theorem 4.2 is already known for non-simple two-sided vector spaces $V$. We have a concrete description of $\Lambda_{00}$ as a skew polynomial ring $k[z;\psi,\delta]$ \cite[Proposition 10.16]{chan_species_2016}, and $k[z;\psi,\delta]$ is a Dedekind domain when $\psi$ is an automorphism \cite[\S1.2.9]{mcconnell_noncommutative_2001}. 
\end{remark}

\subsection{Two-sided ideals in $\Z$-indexed algebras}
We begin by studying two-sided ideals in $\Z$-indexed algebras. Recall from Definition 2.4 that a collection of elements $x=\{x_i \in C_{i,i+\delta}\}_{i \in \Z}$ in a $\Z$-indexed algebra $C$ is a normal family of degree $\delta$ if $x_iC_{i+\delta,j+\delta} = C_{ij}x_{j}$ for all $i,j \in \Z$. Given a normal family $x$, the two-sided ideal generated by the elements $x_i$ will be denoted by $(x)$. Suppose that $C$ is connected and positively indexed, and call the division rings $D_i := C_{ii}$. We will also suppose that $\dim_{D_i}C_{ij} > 1$ and $\dim_{D_j}C_{ij} > 1$ if $i < j$. A right ideal $I \subset C$ is \textit{free of rank 1} if there is a family of regular elements $\{a_i \in C_{i,i+\delta_i}\}_{i \in \Z}$ such that $e_iI = a_iC$ for all $i \in \Z$. If $I$ is two-sided and free of rank 1 on both sides, then there are two families $\{a_i \in C_{i,i+\delta_i}\}_{i \in \Z}$ and $\{b_j \in C_{j-\varepsilon_j,j}\}_{j \in \Z}$ with $e_iI = a_iC$ and $Ie_j = Cb_j$ for all $i,j \in \Z$. These families have associated `translation functions' $t_r,t_l: \Z \to \Z$ defined by $t_r:i \mapsto i + \delta_i$ and $t_l:j \mapsto j - \varepsilon_j$.

\begin{lemma}
    The functions $t_r$ and $t_l$ are inverse, and hence are bijective.
\end{lemma}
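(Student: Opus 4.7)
The plan is to characterize $t_r(i)$ and $t_l(j)$ intrinsically as extrema of the support of the bimodule $I$, deduce that they form a Galois connection on $\Z$, and finally upgrade this to mutually inverse bijections using the dimension hypothesis on $D_{ij}$.

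First I would observe that the two descriptions $e_iI = a_iD$ and $Ie_j = Db_j$, combined with positive indexing, connectedness of $D$, and regularity of the $a_i, b_j$, yield
\[ e_iIe_j \neq 0 \iff j \geq t_r(i) \iff i \leq t_l(j). \]
Consequently $t_r, t_l$ are order-preserving, $t_l \circ t_r \geq \mathrm{id}_{\Z}$ and $t_r \circ t_l \leq \mathrm{id}_{\Z}$.

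Next I would strengthen this to the identity $t_r \circ t_l \circ t_r = t_r$ via a double factorisation: since $a_i \in Ie_{t_r(i)} = Db_{t_r(i)}$, write $a_i = c\,b_{t_r(i)}$ for some $c \in D_{i, t_l(t_r(i))}$; since $b_{t_r(i)} \in e_{t_l(t_r(i))}I = a_{t_l(t_r(i))}D$, write $b_{t_r(i)} = a_{t_l(t_r(i))}\,d$. Substituting, $a_i = c\,a_{t_l(t_r(i))}\,d$, and writing $c\,a_{t_l(t_r(i))} = a_i\,e$ (as it lies in $e_iI = a_iD$), regularity of $a_i$ forces $ed$ to be a local identity; positive indexing then gives $t_r(t_l(t_r(i))) = t_r(i)$. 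A corollary, obtained by computing the one-dimensional bimodule $e_{t_l(j)}Ie_j$ via both descriptions, is that the ring map $\rho_j : K_j \to K_{t_l(j)}$ determined by $b_j k = \rho_j(k)\,b_j$ is always a ring isomorphism.

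The main obstacle is then to show $t_l \circ t_r = \mathrm{id}$ itself, rather than merely $\geq \mathrm{id}$. I would proceed by contradiction: fix $i$, set $j = t_r(i)$ and $i' = t_l(j) \geq i$, and suppose $i' > i$. The bimodule $e_iIe_j$ equals both $a_iK_j$ (of right $K_j$-dimension $1$) and $D_{i,i'}b_j$. Transporting the right $K_j$-action on the latter along the ring isomorphism $\rho_j$ identifies it with the right $K_{i'}$-module structure on $D_{i,i'}$, so $D_{i,i'}$ has right $K_{i'}$-dimension $1$, contradicting the hypothesis $\dim D_{ij} > 1$ for $i \neq j$. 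Hence $i' = i$, and the symmetric argument gives $t_r \circ t_l = \mathrm{id}$. The heart of the argument is precisely this final dimension comparison: the Galois-connection formalism alone only produces an idempotent $t_l \circ t_r$ above the identity, which in principle admits non-identity solutions on $\Z$; it is only the bimodule-dimension hypothesis, imported via $\rho_j$, that forces strict monotonicity of $t_r$.
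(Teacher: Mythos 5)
Your proof is correct, and the heart of it is exactly the paper's argument: compute $e_iIe_{t_r(i)}$ in two ways, as $a_iK_{t_r(i)}$ (right dimension one by regularity of $a_i$) and as $D_{i,t_lt_r(i)}b_{t_r(i)}$, then use regularity of $b_{t_r(i)}$ together with the hypothesis $\dim D_{ij}>1$ for $i\neq j$ to force $t_lt_r(i)=i$. The Galois-connection scaffolding and the preliminary derivation that $\rho_j$ is a ring \emph{isomorphism} (rather than just the injective ring map coming from regularity of $b_j$, which already suffices for the dimension count) are harmless extras; the paper reaches the same contradiction more directly by writing $a_i=xb_{t_r(i)}$ and cancelling the regular element $b_{t_r(i)}$.
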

\begin{proof}
    As $a_i \in I_{i, t_r(i)}=a_iD_{t_r(i)}$ and $I$ is a two-sided ideal, left multiplication by $D_i$ gives an inclusion $D_ia_i \subset a_iD_{t_r(i)}$. Similarly, $b_{j}D_j \subset D_{t_l(j)}b_j$. As $I_{i, t_r(i)} = C_{i, t_lt_r(i)}b_{t_r(i)}$, there is some $x \in C_{i, t_lt_r(i)}$ such that $a_i = xb_{t_r(i)}$. If $t_lt_r(i) \neq i$, we must have 
    \[C_{i, t_lt_r(i)}b_{t_r(i)} = I_{i,t_r(i)} = a_iD_{t_r(i)} = xb_{t_r(i)}D_{t_r(i)} \subset xD_{t_lt_r(i)}b_{t_r(i)}.\]
    Now each $b_{t_r(i)}$ is regular, so we conclude $C_{i, t_lt_r(i)} \subset xD_{t_lt_r(i)}$. Our assumptions on dimension ensure $\dim_{D_i} C_{i,t_lt_r(i)} = 1$ only if $t_lt_r(i) =i$, and the right dimension of $xD_{t_lt_r(i)}$ is 1, so we must have $t_lt_r(i) =i$. A similar argument shows that $t_rt_l(j)=j$ for all $j \in \Z$, so $t_r$ and $t_l$ are inverse functions. 
\end{proof}

It follows that $\delta_i = \varepsilon_{t_r(i)}$ for all $i \in \Z$, and so we have that $a_i=r_ib_{t_r(i)}$ for some $r_i \in D_{i}$.

\begin{propn}
    For all $i,j \in \Z$, $\delta_i = \delta_j$.
\end{propn}
\begin{proof}
    We claim $t_r$ is strictly increasing. To see this, we suppose there is an $i$ for which $t_r(i+1) \leq t_r(i)$. Then 
    \[\{ 0\} \neq a_{i+1}C_{t_r(i+1), t_r(i)} = I_{i+1, t_r(i)} \subset Ie_{t_r(i)}.\]
    But $Ie_{t_r(i)} = Cb_{t_r(i)}$ where $b_{t_r(i)} \in C_{i, t_r(i)}$, and hence $I_{i+1, t_r(i)} = C_{i+1, i}b_{t_r(i)} = \{0\}$, as $C$ is positively indexed. This is a contradiction, and as such we must have that $t_r$ is strictly increasing. The following lemma then completes the proof of the proposition.
\end{proof}

\begin{lemma}
    A function $f: \Z \to \Z$ is bijective and strictly increasing if and only if $f(i)=i+n$ for some $n \in \Z$ and all $i \in \Z$.
\end{lemma}
\begin{proof}
    Clearly $i\mapsto i+n$ is a strictly increasing bijection for $n \in \Z$. For the other direction, we first note that $f(i) = i + n_i$ for each $i \in \Z$, where $n_i := f(i)-i$. Now take any $j \in \Z$ with $j>i$. As $f$ is a bijection, for each $m$ in the interval $[f(i),f(j)]$ there is some $m' \in \Z$ with $f(m') = m$, and as $f$ is strictly increasing we must have $i \leq m' \leq j$. It then follows that $[f(i),f(j)] = \{f(i),f(i+1),\ldots,f(j)\}$, so that
    \[ f(j)-f(i) = \vert [f(i),f(j)]\vert-1 = \vert[i,j]\vert-1 = j-i. \]
    After rearranging we see that $n_j=n_i=:n$, and so $f(i) = i+n$ for all $i \in \Z$.
\end{proof}

In light of Proposition 4.5 we will refer to $\delta_i$ by $\delta$, so that $t_r(i) = i + \delta$ and $t_l(j) = j-\delta$.

\begin{theorem}
    If a two-sided ideal $I$ of $C$ is free of rank 1 on both sides, then $I$ is generated by a normal family of elements.
\end{theorem}
\begin{proof}
    If $I$ is generated by families $a=\{a_i\}$ and $b=\{b_j\}$ on the right and left, respectively, then Lemma 4.4 and Proposition 4.5 show that there is some $\delta \in \Z$ such that $a_i \in C_{i,i+\delta}$ and $b_j \in C_{j-\delta, j}$ for all $i,j \in \Z$. Then there is some $0\neq r \in D_{j}$ such that $a_j = rb_{j+\delta}$, and $C_{ij}r = C_{ij}$ for all $i$ as $D_j$ is a division ring, so 
    \[I_{i,j+\delta} = C_{ij}b_{j+\delta} = (C_{ij}r)b_{j+\delta} = C_{ij}a_j\]
    for all $i \in \Z$. Thus the $a_j$ are also left generators for $I$, and hence $a$ is a normal family as
    \[ a_iC_{i+\delta,j+\delta} = I_{i,j+\delta} = C_{ij}a_{j}. \]
\end{proof}

\subsection{$\Lambda_{00}$ is a Dedekind domain}
We now begin working towards showing that the ring $\Lambda_{00} = A[g^{-1}]_{00}$ is a Dedekind domain. Throughout this section $I$ will be a two-sided ideal of $\Lambda_{00}$. Such an ideal determines ideals $\widetilde{I} = \Lambda I \Lambda$ of $\Lambda$ and $\overline{I} = \widetilde{I} \cap A$ of $A$. We will write elements of $\Lambda$ as $ag^{-r}$, where $a \in A_{ij}$ for some $i,j \in \Z$ and $ag^{-r}=a(g_{j-2r}g_{j-2(r-1)}\ldots g_{j-2})^{-1} \in \Lambda$. Recall that an ideal $J$ of $A$ is right (resp. left) $g$\textit{-saturated} if for $x \in A$, $xg \in J$ (resp. $gx \in J$) implies $x \in J$.

\begin{lemma}
$\overline{I}[g^{-1}] = \widetilde{I}$, and $\overline{I}$ is left and right $g$-saturated.
\end{lemma}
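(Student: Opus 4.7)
The plan is straightforward once one unpacks the definitions and uses the Ore conditions that $\Lambda = A[g^{-1}]$ is built from. Both statements are essentially content-free manipulations, and I do not expect any real obstacle; the only thing to be careful about is correctly interpreting ``multiplication by $g$'' in the $\Z$-indexed setting, where $g$ is a family $\{g_k \in A_{k,k+2}\}_{k \in \Z}$, so that ``$xg$'' for $x \in A_{ij}$ means $xg_j \in A_{i,j+2}$ and ``$gx$'' means $g_{i-2}x \in A_{i-2,j}$.

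For the first claim, the inclusion $\overline{I}[g^{-1}] \subseteq \widetilde{I}$ is immediate: $\overline{I} \subseteq \widetilde{I}$ and $\widetilde{I}$ is an ideal of $\Lambda$, in which every $g_k$ is invertible, so all products of elements of $\overline{I}$ with powers of the $g_k^{-1}$ lie in $\widetilde{I}$. For the reverse inclusion, I would invoke the right Ore condition satisfied by the set of products of the $g_k$: any $y \in \widetilde{I}_{ij}$ has the form $y = a \cdot (g_{j}g_{j+2}\cdots g_{j+2n-2})^{-1}$ with $a \in A_{i,j+2n}$ for some $n \geq 0$. Then $a = y \cdot g_{j}g_{j+2}\cdots g_{j+2n-2}$ lies in $\widetilde{I}$ (as $\widetilde{I}$ is a right ideal) and in $A$ (by construction), so $a \in \overline{I}$, whence $y \in \overline{I}[g^{-1}]$. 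Thus $\widetilde{I} \subseteq \overline{I}[g^{-1}]$.

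For $g$-saturation, suppose $x \in A_{ij}$ with $xg_j \in \overline{I}$. Then $xg_j \in \widetilde{I}$, and since $g_j$ is invertible in $\Lambda$, we can write $x = (xg_j)g_j^{-1} \in \widetilde{I}$. But $x \in A$, so $x \in \widetilde{I} \cap A = \overline{I}$, proving right $g$-saturation. Left $g$-saturation is the mirror argument using the left Ore condition: if $g_{i-2}x \in \overline{I}$ then $x = g_{i-2}^{-1}(g_{i-2}x) \in \widetilde{I} \cap A = \overline{I}$. This completes both parts.

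Since no step requires more than the Ore property of $g$ and the definition of $\overline{I}$ and $\widetilde{I}$, I expect the entire write-up to be a few lines; the ``hard part,'' such as it is, is simply choosing notation that keeps the index $j$ (or $i-2$) of the relevant $g_k$ visible so that the manipulations above are unambiguous.
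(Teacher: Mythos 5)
Your proof is correct and follows essentially the same route as the paper's: both directions of the equality are proved by the same manipulation (an element of $\widetilde{I}$ is of the form $ag^{-r}$ with $a\in A$; clearing the denominator shows $a\in\widetilde{I}\cap A=\overline{I}$), and $g$-saturation is the same one-line cancellation argument. Your added care about which index $g_k$ is being applied is a reasonable clarification but does not change the substance.
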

\begin{proof}
For the first statement we suppose $a \in \overline{I}$ and $ag^{-r} \in \overline{I}[g^{-1}]$. By definition we have $a \in \widetilde{I}$, so $ag^{-r} \in \widetilde{I}$ as $g^{-r} \in \Lambda$. Conversely, if $a \in A$ and $ag^{-r} \in \widetilde{I}$ we have $(ag^{-r})g^r = a \in \widetilde{I}$, and so $a \in \overline{I}$. Hence $ag^{-r} \in \overline{I}[g^{-1}]$.

Now if $x \in A$ and $xg \in \overline{I}$, then $(xg)g^{-1} = x \in \widetilde{I}$, and so $x \in \overline{I}$. Thus $\overline{I}$ is right $g$-saturated. A similar argument shows left $g$-saturation.
\end{proof}

Recall from Section 2.2 that a positively $\Z$-indexed algebra $C$ contains a maximal ideal $\mathfrak{m} = C_{\geq 1}$, and that a $C$-module $M$ is $\mathfrak{m}$-torsion if every element of $M$ is annihilated by some power of $\m$. We also recall that the largest $\m$-torsion submodule of $M$ is denoted by $\tau(M)$.

\begin{lemma}
$A / \overline{I}$ is torsion-free on both sides.
\end{lemma}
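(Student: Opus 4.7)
The plan is to reduce this directly to the $g$-saturation of $\overline{I}$ established in Lemma 4.7. By the evident symmetry (since $\overline{I}$ is both left and right $g$-saturated, and $g_i \in A_{i,i+2}$ exists for every integer $i$), it suffices to handle the right $A$-module case; the left case is obtained by reflecting the argument. Recall from Section 2.2 that a right $A$-module $M$ is $\mathfrak{m}$-torsion if for each $m \in M_i$ there is some $N$ with $m \cdot A_{\geq N} = 0$; so a non-zero torsion element of $A/\overline{I}$ amounts to some $a \in A_{ij}$ with $a \notin \overline{I}$ but $a \cdot A_{\geq N} \subseteq \overline{I}$ for some $N \geq 1$, and I will derive a contradiction from this configuration.

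The key observation is that $A_{\geq N}$ always contains explicit products of the normal family $g$. Concretely, choose $n$ large enough that $2n \geq N$ and consider
\[ \gamma := g_j g_{j+2} \cdots g_{j+2n-2} \in A_{j, j+2n} \subseteq A_{\geq N}. \]
Since $\gamma \in A_{\geq N}$, the standing assumption forces $a\gamma \in \overline{I}$. Now I apply Lemma 4.7 iteratively: right $g$-saturation applied to $a\gamma = (a g_j g_{j+2} \cdots g_{j+2n-4}) \cdot g_{j+2n-2} \in \overline{I}$ yields $a g_j g_{j+2} \cdots g_{j+2n-4} \in \overline{I}$, and peeling off one factor of $g$ at a time in this fashion (a total of $n$ applications) leaves $a \in \overline{I}$, contradicting $a \notin \overline{I}$.

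The left case proceeds identically: if $\overline{a}$ is left torsion with $A_{\geq N} \cdot a \subseteq \overline{I}$ (i.e.\ $A_{i-k, i} a \subseteq \overline{I}$ for all $k \geq N$), pick $n$ with $2n \geq N$, observe that the element $g_{i-2n} g_{i-2n+2} \cdots g_{i-2} \cdot a$ must lie in $\overline{I}$, and strip off the leftmost factors one at a time using left $g$-saturation to again conclude $a \in \overline{I}$.

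There is no real obstacle here: Lemma 4.7 has already supplied the structural content, and the only new input is the elementary observation that the normal family $g$ furnishes elements sitting arbitrarily deep in $\mathfrak{m}$, which is enough to ensure that any hypothetical torsion annihilator is already large enough for saturation to collapse it.
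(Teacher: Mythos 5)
Your proof is correct and takes essentially the same route as the paper: the paper likewise observes that a torsion element of $A/\overline{I}$ yields some $x$ with $xg^n \in \overline{I}$ and then invokes the $g$-saturation from Lemma 4.7 to conclude $x \in \overline{I}$. You simply make explicit two small points the paper leaves implicit — that a suitable product of the $g_i$ sits in $A_{\geq N}$, and that passing from $xg^n$ to $x$ requires iterating the one-step saturation $n$ times.
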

\begin{proof}
Suppose $x + \overline{I} \in A / \overline{I}$ is torsion. Then, in particular, there is some $n>0$ such that $x g^n \in \overline{I}$. But $\overline{I}$ is $g$-saturated by the previous lemma, so $x \in \overline{I}$ and hence $x+ \overline{I} = 0 \in A/\overline{I}$.
\end{proof}

Every noetherian $A$-module $M$ has a finite resolution by sums of the rank 1 free modules $e_iA$, and so has Hilbert function $f_M(n) = \dim_{K_n}M_n$ of the form $cn+d$ for $n\gg 0,~c \in \Z^+,~d \in \Z$. The leading coefficient $c$ is the \textit{multiplicity} of $M$, and $\dim M = \deg f_M(n) + 1$ if $f_M(n)$ is not eventually 0, where $\dim$ is the Gelfand-Kirillov dimension. As in \cite[Section 6]{chan_species_2016}, we say that a noetherian $A$-module $M$ is \textit{critical} if $\dim(M/N) < \dim(M)$ for all proper submodules $N \subset M$. If $M$ is critical and $\dim M \geq 1$, any submodule $N \subset M$ must have the same multiplicity as $M$. Indeed, $f_{M/N}(n) = f_M(n) - f_N(n)$, so $\deg f_{M/N}(n) < \deg f_M(n)$ only if $f_M(n)$ and $f_N(n)$ have the same degree and leading coefficient.

\begin{propn}
Each $e_i\overline{I}$ is free of rank 1.
\end{propn}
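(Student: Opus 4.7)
My plan is to produce an explicit generator for $e_i \overline{I}$ as a right $A$-module by taking an element of minimum row-degree and verifying it generates the whole row.

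The first step is to set up: since $I \neq 0$ and $\widetilde{I} \neq 0$, the strong grading of $\Lambda$ (cf.\ \cite[Lemma 10.13]{chan_species_2016}) forces every row $e_i \widetilde{I}$ to be nonzero. Multiplying an element of $e_i \widetilde{I}$ by a sufficiently high power of $g$ (using normality) pushes it into $A$, hence $e_i \overline{I} \neq 0$. I can therefore define $d_i = \min\{d \geq 0 : \overline{I}_{i, i+d} \neq 0\}$.

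The second step is to pin down a candidate generator. By Corollary 3.13, $A_{i, i+d_i} \cong B_{i, i+d_i} \oplus A_{i, i+d_i-2} \cdot g_{i+d_i-1}$. The $g$-saturation of $\overline{I}$ (Lemma 4.6) combined with the minimality of $d_i$ forces $\overline{I}_{i, i+d_i} \cap A_{i,i+d_i-2}\cdot g = 0$: any element in this intersection would have the form $yg$ with $y\in \overline{I}_{i,i+d_i-2}$, contradicting minimality. Consequently $\overline{I}_{i, i+d_i}$ embeds into $B_{i, i+d_i} \cong F$ as a $K_i$-$K_{i+d_i}$-subbimodule. Using simplicity of $\bimod{K_0}{F}{K_1}$ together with the fact that $\overline{I}$ is two-sided (so its "rows" are constrained by left multiplication from earlier rows), I can narrow the possibilities for $\overline{I}_{i,i+d_i}$ until it is a one-sided cyclic bimodule, and pick $a_i$ spanning it.

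The third step is to show $e_i \overline{I} = a_i A$. The inclusion $a_i A \subseteq e_i \overline{I}$ is automatic. For the reverse inclusion, I would induct on $j - (i + d_i)$: write $b \in \overline{I}_{i,j}$ using the iterated decomposition $A_{ij} = \bigoplus_{m} B_{i,j-2m}\,g^m$, peel off the top $B_{ij}$-part, use $g$-saturation to say the remaining $g$-divisible piece lies in $\overline{I}_{i,j-2}\cdot g$, and apply the induction hypothesis plus Theorem 3.1 to express the top part as $a_i$ times an element of $A_{i+d_i, j}$. The main obstacle is the second step — controlling the bimodule $\overline{I}_{i,i+d_i}$ tightly enough to produce a single generator — which is where the two-sided structure of $\overline{I}$ (inherited from $I \subset \Lambda_{00}$) and the particular form of the inclusion into $B_{i,i+d_i}\cong F$ must be used in tandem. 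An alternative, more abstract route would be to identify $\pi(e_i\overline{I})$ as an invertible sheaf on $\PP^{nc}(F)$ (since $A/\overline{I}$ is $g$-torsion-free by Lemma 4.7) and then invoke Grothendieck splitting \cite[Proposition 10.2]{chan_species_2016} to conclude $\pi(e_i\overline{I}) \cong \cO(n)$, then lift back via $g$-saturation; but the direct algebraic construction above is more in keeping with the explicit tools developed in Section 3.
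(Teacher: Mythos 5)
Your proposed route is genuinely different from the paper's, which is entirely homological: $M_i := e_i(A/\overline{I})$ is $\m$-torsion-free by Lemma~4.8, so $\underline{\Hom}(A/\m, M_i)=0$; by \cite[Lemma 8.8]{chan_species_2016} this rules out $\mathrm{pd}\,M_i=2$, and since $A$ has global dimension $2$ we get $\mathrm{pd}\,M_i\leq 1$, hence $e_i\overline{I}$ is projective and therefore free; rank $1$ then follows by comparing Hilbert polynomials against that of $e_iA$. No explicit generator is ever produced. Your direct construction has a real gap at exactly the spot you flag as ``the main obstacle.'' You correctly observe that the projection $\overline{I}_{i,i+d_i}\to B_{i,i+d_i}$ is an injective $K_i$--$K_{i+d_i}$-bimodule map (the splitting of Corollary~3.13 is bilinear, and $g$-saturation together with minimality of $d_i$ kill the kernel), but you never show the image is one-dimensional over $K_{i+d_i}$ on the right. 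The danger is concrete: if $B_{i,i+d_i}\cong F_{\tau_{i,i+d_i}^{d_i-1}}$ were a \emph{simple} $K_i$--$K_{i+d_i}$-bimodule, any nonzero subbimodule would be all of it, hence $2$-dimensional on the right, and no single $a_i$ could span $\overline{I}_{i,i+d_i}$. Whether this twisted bimodule is simple depends on the dihedral combinatorics of the twist, which your outline does not engage with; the proposition itself forces non-simplicity of $B_{i,i+d_i}$ a posteriori whenever $0\neq\overline{I}\neq A$, but that is precisely what needs to be proved, not assumed.

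Step~3 inherits the same difficulty: writing $b=b_0+b'g$ with $b_0$ the $B_{ij}$-component does not, by itself, put $b_0$ into $\overline{I}$, so you cannot invoke saturation on $b-b_0$ to conclude $b'\in\overline{I}_{i,j-2}$ until you already know $b_0\in a_iA$ --- which is what the induction is supposed to deliver. Your alternative sketch via invertible sheaves and Grothendieck splitting is much closer in spirit to what actually works and essentially repackages the paper's homological argument. The direct construction is not impossible in principle, but it would require a separate analysis of the low-degree bimodule structure of $\overline{I}$ (ultimately amounting to a normality argument like Theorem~4.6) that your outline neither carries out nor reduces to known results, whereas the homological proof bypasses the question entirely.
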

\begin{proof}
Consider the $A$-modules $M_i = e_i A / \overline{I}$. This module is torsion-free by the previous lemma, so $\tau(M_i) = \varinjlim \underline{\mathrm{Hom}}(A / \m^n, M_i) = 0$. In particular $\underline{\mathrm{Hom}}(A / \m, M_i) = 0$, and so by \cite[Lemma 8.8]{chan_species_2016} we have the projective dimension $\mathrm{pd}~ M_i \neq 2$. The noncommutative symmetric algebra has global dimension 2 \cite[Proposition 6.1]{chan_species_2016}, so $\mathrm{pd}~ M_i \leq 1$, and hence $\mathrm{pd}~ e_i\overline{I} \leq 0$. It follows that $e_i\overline{I}$ is free, so there is some collection $\{ i_1, \ldots i_r \}$ of integers for which $e_i\overline{I} \cong \bigoplus_{j=1}^r e_{i_{j}}A$. Note all the $i_j$ must be greater than $i$ due to positive indexing. It follows from Proposition 3.10 that $f_{e_iA}(n) = n+1$, so we see that for $n\gg 0$ the Hilbert function of $e_i\overline{I}$ is 
\[f_{e_i\overline{I}}(n) = \sum_{j=1}^r n-(i_j-i) + 1 = rn + (r(i+1)-\sum_{j=1}^r i_j).\]
Thus the multiplicity of $e_i\overline{I}$ is $r$. As $e_iA$ is critical \cite[Theorem 6.2(1)]{chan_species_2016} and has multiplicity 1, we must have $r=1$. Hence $e_i\overline{I}$ is free of rank 1.
\end{proof}

The next result now follows immediately by applying Theorem 4.7. 
\begin{cor}
The ideal $\overline{I}$ of $A$ is generated by a normal family of elements $\{a_i\}$.
\end{cor}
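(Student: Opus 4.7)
The plan is a direct application of Theorem 4.6 to $A = \bbS^{nc}(F)$ and the ideal $\overline{I}$. Three hypotheses must be checked: (i) $A$ is connected and positively $\Z$-indexed, which is immediate from Section 2.3; (ii) $\dim_{K_i} A_{ij} > 1$ for all $j > i$, which is immediate from Theorem 3.12 since the summand $F_{\tau_{ij}^0}$ alone has left $K_i$-dimension $2$; and (iii) $\overline{I}$ is free of rank $1$ as both a left and a right $A$-module. Once (iii) is in place, Theorem 4.6 hands back the required normal family $(a_i)$ generating $\overline{I}$.

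Proposition 4.9 supplies the right-sided part of (iii): each $e_i \overline{I}$ equals $a_i A$ for a single regular element $a_i \in A$. For the left-sided statement that $\overline{I} e_j = A b_j$ for some $b_j$, I would rerun the argument of Proposition 4.9 with sides reversed. Lemma 4.8 already gives torsion-freeness of $A / \overline{I}$ on both sides, so for each $j$ the left module $A e_j / \overline{I} e_j$ has no nonzero left submodule annihilated by $\m$, whence $\underline{\Hom}(A/\m,\, A e_j / \overline{I} e_j) = 0$ on the left. The left analogue of \cite[Lemma 8.8]{chan_species_2016} then combines with left global dimension $2$ of $A$ to force $\mathrm{pd}\, \overline{I} e_j \leq 0$, so $\overline{I} e_j$ is a free left module. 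A left Hilbert-polynomial comparison against $A e_j$, using the left/right symmetric form of Theorem 3.12, pins its left rank to $1$.

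The main obstacle is verification rather than invention: essentially everything reduces to rerunning Proposition 4.9 on the left, and this in turn rests on the left/right symmetry of the homological results of \cite[Section 8]{chan_species_2016} invoked there. Since $\bbS^{nc}(F)$ is $2$-periodic and built from the two-sided vector space $\bimod{K_0}{F}{K_1}$ with no preferred side, this symmetry is expected, but it should be recorded explicitly before the corollary so that the appeal to the ``left version'' of Proposition 4.9 is justified rather than merely asserted.
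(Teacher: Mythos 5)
Your proposal is correct and follows the paper's own approach: apply Theorem 4.6, with Proposition 4.9 supplying right-sided freeness of rank 1 and the left-sided analogue following by symmetry. The paper is terser (it simply declares the corollary ``follows immediately by applying Theorem 4.6''), so your explicit identification of the need to rerun Proposition 4.9 on the left — and your note that this symmetry should be recorded — is exactly the gap the paper leaves implicit; the only small slip is that for $j-i$ even the summand witnessing $\dim_{K_i}A_{ij}>1$ is $F_{\tau_{ij}^{1}}$ (or just $K_i \oplus F_{\tau_{ij}^{1}}$), not $F_{\tau_{ij}^{0}}$, but the conclusion is unaffected.
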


We now recall the definition of a $\Z$-indexed \textit{twisted ring}, as introduced by Chan and Nyman in \cite{chan_species_2016}.

\begin{defn}
    Let $L$ be a field, and let $\psi = \{ \psi_{i}\}_{i\in \Z}$ be a collection of $L$-automorphisms. The \textit{full twisted ring} $C = (L;\psi)$ is the $\Z$-indexed algebra defined by $C_{ij} = L$ for all $i,j \in \Z$, with multiplication defined by
    \[c_{ij}c_{jl} = c_{ij}\cdot \psi_{i+1}\psi_{i+2}\ldots\psi_{j}(c_{jl}) \in C_{il},\]
    where $c_{ij} \in C_{ij}$, $c_{jl} \in C_{jl}$, and $\cdot$ is the usual $L$-multiplication. A \textit{twisted ring} on $L$ is any subring $C' \subset C$ such that $C'_{\geq d} = C_{\geq d}$ for some $d \geq 0$.
\end{defn}

\begin{remark}
    If we let $\bimod{i}{\psi}{j}:= \psi_{i+1} \circ \ldots \circ \psi_j$, then we may reinterpret the notion of a full twisted ring in the notation from Section 3. We let $C_{ij}:= L_{\bimod{i}{\psi}{j}}$, and the multiplication $C_{ij} \times C_{jl} \to C_{il}$ is the twisted $L$-multiplication
    \[c_{ij}c_{jl} = c_{ij} \cdot \bimod{i}{\psi}{j}(c_{jl}),\]
    where $\cdot$ denotes the usual $L$-multiplication. This multiplication is well-defined as $\bimod{i}{\psi}{j} \bimod{j}{\psi}{l} = \bimod{i}{\psi}{l}$.
\end{remark}

\begin{lemma}
    Let $L$ be a field and $C = (L,\{\psi_i\}_{i\in\Z})$ be a twisted ring on $L$. If $J$ is an ideal with $e_iJ \neq 0$ for all $i \in \Z$, then any $C/J$-module is $\m_C$-torsion.
\end{lemma}
\begin{proof}
    The right modules $e_iC$ have Gelfand-Kirillov dimension 1, as $C$ is a full twisted ring in high degrees. As $C$ is a domain and each $e_iJ \neq 0$, we have
    \[\dim e_i (C / J) \leq \dim e_iC - 1 = 0,\]
    so each $e_i(C/J)$ is $\m_C$-torsion. Then any $C/J$-module must be $\m_C$-torsion.
\end{proof}

Recall from \cite{chan_species_2016} that the quotient $\Z$-indexed algebra $B = A/gA$ is a twisted ring on $F$, full in positive degrees, so in particular the previous proposition applies to $B$. For the duration of the next proposition we refer to the ideal $(g)$ by $\mathfrak{g}$.

\begin{propn}
If $I$ is idempotent, $\overline{I} / \overline{I}^2$ is non-zero and has projective dimension 2.
\end{propn}
\begin{proof}
    We note that $\overline{I}$ cannot be idempotent, as $A$ is connected and positively indexed, and $\overline{I}$ is contained in $\m$. As such, $\overline{I} / \overline{I}^2$ is non-zero. Using Lemma 4.8 we see 
    \[(\overline{I} / \overline{I}^2)[g^{-1}] = \widetilde{I} / \widetilde{I}^2 = 0\] 
    as $\widetilde{I}$ is idempotent, so $\overline{I} / \overline{I}^2$ is $g$-torsion. Then the annihilator $J := \mathrm{Ann}_{\overline{I} / \overline{I}^2}g$ of $g$ in $\overline{I} / \overline{I}^2$ must be nonzero. By construction $J$ is $\overline{I}$-torsion, so $J$ is an $A / (\mathfrak{g} + \overline{I})$-module. Two-sided ideals in $\Z$-indexed algebras are subobjects in the abelian category $\mathsf{Bimod}(A)$, so we see
    \[A / (\mathfrak{g} + \overline{I}) \cong (A / \mathfrak{g}) / ((\mathfrak{g} + \overline{I})/\mathfrak{g})=B / \overline{I}B,\] 
    as $(\mathfrak{g} + \overline{I})/\mathfrak{g}$ is the image of the ideal $\overline{I}$ in $B$. The previous lemma then tells us that $J$, and hence also $\overline{I} / \overline{I}^2$, must contain $\m$-torsion elements. Finally, \cite[Lemma 8.8]{chan_species_2016} then says that $\overline{I} / \overline{I}^2$ must have projective dimension 2.
\end{proof}

\begin{theorem}
    $\Lambda_{00}$ is a Dedekind domain.
\end{theorem}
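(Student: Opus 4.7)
The plan is to verify the characterization of Dedekind domains cited after Definition 4.1: I must show $\Lambda_{00}$ is a hereditary noetherian domain whose only idempotent two-sided ideals are $0$ and $\Lambda_{00}$ itself.

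The first three properties I would obtain by transferring from $A$ and $\Lambda$ via the strong grading equivalence $\Mod \Lambda_{00} \simeq \Gr \Lambda$ underlying \cite[Lemma 10.13]{chan_species_2016}. Since $A = \bbS^{nc}(F)$ is a noetherian domain, Ore localisation gives that $\Lambda = A[g^{-1}]$ is too, and these properties descend to $\Lambda_{00}$. For hereditary, the proof of Proposition 4.9 adapts verbatim to any right $g$-saturated right ideal $J \subset A$: the quotient $A/J$ is $\m$-torsion-free (an $\m$-torsion class is killed by some $g^n$, and $g$-saturation then forces it to be zero), so \cite[Lemma 8.8]{chan_species_2016} together with $\mathrm{gldim}\, A = 2$ shows $J$ is projective. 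Every right ideal of $\Lambda$ arises as the localisation of such a $J$ and hence is projective, and the equivalence transfers this to $\Lambda_{00}$; the left case is symmetric.

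The heart of the argument is ruling out nontrivial idempotent two-sided ideals. Suppose $I$ is a nonzero proper idempotent ideal of $\Lambda_{00}$. The strong grading of $\Lambda$ shows each $\widetilde{I}_{ii}$ is a proper ideal of $\Lambda_{ii}$ (the latter being conjugate to $\Lambda_{00}$ by a power of $g$), so $\overline{I} = \widetilde{I} \cap A$ is a proper two-sided ideal of $A$ contained in $\m$, and Lemma 4.14 applies to yield a nonzero right $\m$-torsion class in $\overline{I}/\overline{I}^2$. On the other hand, Corollary 4.10 furnishes a normal family $(a_i)$ with $e_i \overline{I} = a_i A$; normality then gives $e_i \overline{I}^2 = a_i \overline{I}$, and left multiplication by the regular element $a_i$ is a right $A$-module isomorphism identifying
\[ e_i \overline{I}/e_i \overline{I}^2 \cong A/\overline{I} \]
up to a shift of indices. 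By Lemma 4.8 the right-hand side is $\m$-torsion-free, hence so is each summand of $\overline{I}/\overline{I}^2$ and therefore the whole, contradicting Lemma 4.14. We conclude $I = 0$.

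The step I expect to require most care is confirming that a proper ideal of $\Lambda_{00}$ indeed pulls back to $\overline{I} \subset \m$, so that Lemma 4.14 may be invoked; this is the one place where the interplay between ring-theoretic ideals of $\Lambda_{00}$ and $\Z$-indexed-algebra ideals of $\Lambda$ needs to be handled carefully. The remainder of the proof is a direct assembly of the preceding lemmas.
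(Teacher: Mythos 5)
Your proof is correct, but it departs from the paper's route in two of the three steps. The noetherian-domain part is the same. For hereditary, the paper transfers heredity from the sheaf category on $\PP^{nc}(F)$ (via \cite[Corollary 8.7]{chan_species_2016} and the quotient functor, using $\qgr\Lambda\simeq\gr\Lambda$), whereas you argue ring-theoretically: every right ideal of $\Lambda$ comes from a right $g$-saturated right ideal $J\subset A$, for which $e_iA/e_iJ$ is $\m$-torsion-free, so $\mathrm{pd}\,e_iJ=0$ by \cite[Lemma~8.8]{chan_species_2016} and $\mathrm{gldim}\,A=2$. Both work; yours is more self-contained but leans twice on the \cite[Lemma~8.8]{chan_species_2016} machinery, while the paper's recycles a previously established geometric fact. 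For the idempotent step, the paper uses Corollary~4.10 to make $\overline{I}$ and $\overline{I}^2$ principal, exhibits $0\to\overline{I}^2\to\overline{I}\to\overline{I}/\overline{I}^2\to 0$ as a length-one projective resolution, and contradicts Lemma~4.14 via a projective-dimension count. You instead note that left multiplication by the regular normal generator $a_i$ yields $e_i\overline{I}/e_i\overline{I}^2\cong e_{i+\delta}A/e_{i+\delta}\overline{I}$, which is $\m$-torsion-free by Lemma~4.8, contradicting Lemma~4.14 directly --- a slightly cleaner route that avoids the second appeal to \cite[Lemma~8.8]{chan_species_2016}. You also correctly address, and the paper leaves implicit, the point that $I$ being a nonzero proper ideal of $\Lambda_{00}$ forces $\overline{I}\subset\m$ (since $\overline{I}_{ii}\neq 0$ for some $i$ would give $e_i\in\widetilde{I}$ and hence $\widetilde{I}=\Lambda$ by strong grading), which is needed to invoke Lemma~4.14.
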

\begin{proof}
    It is shown in \cite[Theorems 5.2, 6.2]{chan_species_2016} that $A$ is noetherian and a domain, so it is clear that $\Lambda_{00}$ also inherits these properties. 

    For any $\Lambda$-module $M$, there is a $g$-torsion-free $A$-module $\overline{M}$ such that $\overline{M}[g^{-1}] = M$. Similarly, for any projective resolution $P^{\bullet}$ of $M$ there is a projective resolution $\overline{P}^{\bullet}$ of $\overline{M}$ such that the following diagram (where all the vertical maps are $(-)[g^{-1}]$) commutes.

    \begin{center}
        \begin{tikzcd}
        \overline{P}^{2} \arrow[r] \arrow[d] & \overline{P}^{1} \arrow[r] \arrow[d] & \overline{P}^{0} \arrow[r] \arrow[d] & \overline{M} \arrow[d] \\
        P^{2} \arrow[r] & P^{1} \arrow[r] & P^{0} \arrow[r] & M.
    \end{tikzcd}
    \end{center}

    Applying the quotient functor $\pi$ gives us projective resolutions $\pi \overline{P}^{\bullet}$ and $\pi P^{\bullet}$ of the coherent sheaves $\pi \overline{M}$ and $\pi M$ on the schemes $\PP^{nc}(F)$ and $\qgr \Lambda$, respectively. The diagram above becomes the following, where now the vertical maps are all $\pi((-)[g^{-1}])$.
    
    \begin{center}
        \begin{tikzcd}
        \pi\overline{P}^{2} \arrow[r] \arrow[d] & \pi\overline{P}^{1} \arrow[r] \arrow[d] & \pi\overline{P}^{0} \arrow[r] \arrow[d] & \pi\overline{M} \arrow[d,] \\
        \pi P^{2} \arrow[r] & \pi P^{1} \arrow[r] & \pi P^{0} \arrow[r] & \pi M.
    \end{tikzcd}
    \end{center}
    
    Now $\PP^{nc}(F)$ is hereditary \cite[Corollary 8.7]{chan_species_2016}, hence $\qgr \Lambda$ must also be. Then both $\Lambda$ and $\Lambda_{00}$ must be hereditary, as $\qgr \Lambda =\gr \Lambda$.

    Finally, we show that $\Lambda_{00}$ has no nontrivial idempotent ideals. For any proper ideal $I$ of $\Lambda_{00}$, the ideals $\overline{I}$ and $\overline{I}^2$ of $A$ are principal. It is clear that $\overline{I}$ is principal, as $\overline{I}$ is generated by a normal family of elements $\{a_i\}_{i \in \Z}$ of degree $\delta$ by Corollary 4.11. To see that $\overline{I}^2$ is principal, note that any element of $\overline{I}^2_{il}$ can be written as a sum of products $a_ixa_jy$ with $x \in A_{i+\delta, j}$ and $y \in A_{j+\delta, l}$. Then, by normality, there is some $x' \in A_{i+2\delta, j}$ such that $a_ixa_jy = a_ia_{i+\delta}x'y$. So $\overline{I}^2$ is the ideal generated by the normal family $\{a_ia_{i+\delta}\}$. Then 
    \[0 \to \overline{I}^2 \to \overline{I} \to \overline{I} / \overline{I}^2 \to 0\] 
    is a projective resolution of $\overline{I} / \overline{I}^2$, and thus $\mathrm{pd}~\overline{I} / \overline{I}^2 \leq 1$. But Proposition 4.15 says $\mathrm{pd}~\overline{I} / \overline{I}^2 =2$ if $I$ is idempotent, and so $I$ cannot be idempotent.
\end{proof}

\subsection{$\Lambda_{00}$ has dimension 1}
We end this section by showing that $\Lambda_{00}$ has dimension 1, which together with Theorem 4.16 shows $\Lambda_{00}$ should be thought of as a noncommutative affine curve. The $g$-adic filtration on $A$ induces a (now ascending) filtration $\Lambda_{00}^i$ on $\Lambda_{00}$ in the following way. The $i$-th piece of the filtration $\Lambda_{00}^i$ is $A_{0,2i}g^{-i}$, with inclusion maps 
\begin{align*}
    \iota_{ij}:\Lambda_{00}^i &\hookrightarrow \Lambda_{00}^{j} \\
    ag^{-i} &\mapsto (ag^{j-i})g^{-j}
\end{align*}
for $i \leq j$. Furthermore, $\Lambda_{00}^i \Lambda_{00}^1 = \Lambda_{00}^1 \Lambda_{00}^i = \Lambda_{00}^{i+1}$ as $A$ is generated in degree 1 and $g$ is a normal family.

\begin{propn}
    The Hilbert function of $\Lambda_{00}$ is $f_{\Lambda_{00}}(n) = \begin{cases}
        1 & \text{ if }n=0, \\
        2 & \text{ if }n>0,
    \end{cases}$ and hence $\dim\Lambda_{00} = 1$.
\end{propn}
\begin{proof}
    It suffices to show the claims hold for the associated graded ring $R=\mathrm{gr}_g(\Lambda_{00})$ by \cite[Proposition 6.6]{krause2000growth}. Via Theorem 3.12 we see that $R_0 = K_0$ and $R_i \cong F_{\tau_{0,2i}}$ for $i>0$. Then $f_R(0)=1$ and $f_R(n) = 2$ if $n>0$, so $\dim R = \deg f_R(n)+1= 1$. 
\end{proof}

\section{Algebraicity}
To further study the ideals of $\Lambda_{00}$ we will need to treat two distinct cases. Following \cite{ringel_representations_1976}, we say a two-sided vector space $\bimod{K_0}{V}{K_1}$ is \textit{algebraic} if there is a common subfield $k \subset K_0, K_1$ of finite index which acts centrally on $V$. Otherwise, $V$ is \textit{non-algebraic}. 

\begin{ex}
    Consider the field $L = \Q(i, \sqrt[4]{2})$. Then $G = \mathrm{Gal}(L/\Q) \cong D_{4}$, generated by the `reflection'
    \[\tau:i \mapsto -i\]
    and the `rotation'
    \[r: \sqrt[4]{2} \mapsto i\sqrt[4]{2}.\]
    So to turn $L$ into an algebraic two-sided vector space of rank 2 is to choose two order 2 subgroups $G_0,G_1 \leqslant G$, as the fixed fields $L^{G_i}$ have index 2 in $L$. For example, let $G_0 = \langle \tau \rangle$ and $G_1 = \langle r^2\tau \rangle$. Then $L^{G_0} = \Q(\sqrt[4]{2})$ and $L^{G_1} = \Q(i\sqrt[4]{2})$, so $L^{G_0} \cap L^{G_1} = L^{\langle\tau,r^2\rangle}=\Q(\sqrt{2})$. 
\end{ex}

\begin{ex}
    The transcendence degree 1 extension $\C(t)$ of $\C$ has index 2 subfields $\C(t^2)$ and $\C((t-1)^2)$. Then $\C(t)$ is a rank 2, two-sided $\C(t^2)-\C((t-1)^2)$-vector space. The two Galois involutions are
    \[\tau_0: t \mapsto -t\]
    and
    \[\tau_1: t-1 \mapsto 1-t,\]
    and hence their composite is
    \[\tau_1\tau_0: t \mapsto t - 2.\]
    This automorphism has infinite order, as it will never send $t$ to a multiple of $t-1$, and so $k = \C$ and $\bimod{\C(t^2)}{\C(t)}{\C((t-1)^2)}$ is non-algebraic. 
\end{ex}

We remark that Example 2.5 also contains an example of a non-simple, non-algebraic two-sided vector space. For the remainder of the paper we will assume $V=F$ is a simple two-sided vector space of rank 2. The main goal of Section 5 is to show algebraicity determines the dichotomy on $\Lambda_{00}$ presented in Theorem 1.4, relating $\Lambda_{00}$ to the two-sided vector space $\bimod{K_0}{F}{K_1}$ and to the group $H = \langle \tau_0,\tau_1 \rangle$.

\begin{theorem}
Let $\sigma := \tau_1\tau_0 \in H$ be the composition of the Galois involutions. Then
    \begin{itemize}
    \item $\bimod{K_0}{F}{K_1}$ is algebraic $\Longleftrightarrow$ $\vert \sigma \vert < \infty$ $\Longrightarrow$ $\Lambda_{00}$ is finite over its center,
    \item $\bimod{K_0}{F}{K_1}$ is non-algebraic $\Longleftrightarrow$ $\vert \sigma \vert = \infty$ $\Longrightarrow$ $\Lambda_{00}$ is a simple ring.
\end{itemize}
\end{theorem}

The proof of this result is contained in Corollary 5.5 and Theorems 5.6 and 5.9. Theorem 5.3 resembles Theorem 7.3 of \cite{artin_modules_1991}, which applies to a similar affine open subset of a noncommutative quadric (divisor of bi-degree (2,2) in $\PP^1 \times \PP^1$, or in other words a rank 2 $\mathcal{O}_{\PP^1}$-bimodule). In a sense, Theorem 5.3 can be viewed as a version of \cite[Theorem 7.3]{artin_modules_1991} `over the generic point' of a $\Z$-indexed noncommutative quadric, as introduced by Van den Bergh in \cite[Section 5]{van_den_bergh_noncommutative_2011}.

\subsection{Algebraicity and the group $H$}
For a simple two-sided vector space $\bimod{K_0}{F}{K_1}$, any subfield $L$ of $K_0$ and $K_1$ must be contained in the intersection $K_0 \cap_F K_1$. We may now interpret algebraicity of $\bimod{K_0}{F}{K_1}$ in terms of the group $H$ as follows.

\begin{propn}
    $\bimod{K_0}{F}{K_1}$ is algebraic if and only if $H$ is a finite group.
\end{propn}
\begin{proof}
      The discussion above shows that if $F$ is algebraic, the finite index subfield $k$ of $K_0$ and $K_1$ acting centrally on $F$ must be contained in the intersection $K_0 \cap_F K_1$, and hence $K_0 \cap_F K_1$ must have finite index in $K_0$ and $K_1$. The result then follows from the fundamental theorem of Galois theory \cite[Chapter V, \S10.7. Theorem 4]{bourbaki_algebra_2013}.
\end{proof}

We then immediately obtain the first part of Theorem 5.3.

\begin{cor}
    $\bimod{K_0}{F}{K_1}$ is algebraic if and only if $\vert \sigma \vert < \infty$.
\end{cor}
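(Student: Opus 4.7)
The plan is to leverage Proposition 5.4, which already identifies algebraicity of $\bimod{K_0}{F}{K_1}$ with finiteness of $H = \mathrm{Gal}(F/k) = \langle \tau_0, \tau_1 \rangle \leqslant \mathrm{Aut}(F)$. It then suffices to show that finiteness of $H$ is equivalent to finiteness of $|\sigma|$ where $\sigma = \tau_1 \tau_0$. This reduces the corollary to an elementary group-theoretic observation about subgroups of $\mathrm{Aut}(F)$ generated by two involutions.

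First I would observe that $\tau_0$ and $\tau_1$ are both order-$2$ elements of $\mathrm{Aut}(F)$ (being the nontrivial elements of $\mathrm{Gal}(F/K_0)$ and $\mathrm{Gal}(F/K_1)$ respectively), and that they are distinct since $K_0 \neq K_1$ (otherwise $\bimod{K_0}{F}{K_1}$ would fail to be simple of rank $2$ as set up in Section 2.3). Hence $H$ is a subgroup of $\mathrm{Aut}(F)$ generated by two distinct involutions. A direct computation gives $\tau_0 \sigma \tau_0^{-1} = \tau_0 \tau_1 \tau_0 \tau_0 = \tau_0 \tau_1 = \sigma^{-1}$, so $H$ admits the dihedral-type presentation $H = \langle \tau_0, \sigma \mid \tau_0^2 = 1,\, \tau_0 \sigma \tau_0 = \sigma^{-1}\rangle$ (possibly with the extra relation $\sigma^n = 1$).

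For the forward direction of the auxiliary equivalence, if $|\sigma| = n < \infty$ then the relation $\tau_0 \sigma \tau_0 = \sigma^{-1}$ shows that every element of $H$ can be written either as $\sigma^i$ or as $\tau_0 \sigma^i$ with $0 \leq i \leq n - 1$, whence $|H| \leq 2n < \infty$. Conversely, if $H$ is finite, then the cyclic subgroup $\langle \sigma \rangle \leqslant H$ is also finite, so $|\sigma| < \infty$. Combining this equivalence with Proposition 5.4 yields the claim.

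The argument is essentially immediate once Proposition 5.4 is in hand; there is no real obstacle, only the bookkeeping required to confirm the dihedral structure of $H$ inside $\mathrm{Aut}(F)$. Note the simplicity and rank $2$ hypothesis on $\bimod{K_0}{F}{K_1}$ is what ensures $\tau_0 \neq \tau_1$ and thus that we are genuinely in the two-involution setting rather than a degenerate one.
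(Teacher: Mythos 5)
Your proof is correct and takes essentially the same approach as the paper: the paper derives the corollary directly from Proposition~5.4, leaving the dihedral-group observation that $|H| < \infty \Leftrightarrow |\sigma| < \infty$ implicit (it is mentioned in passing in the introduction), whereas you simply spell it out.
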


\subsection{{$\Lambda_{00}$} for algebraic two-sided vector spaces}
We will now fix $\bimod{K_0}{F}{K_1}$ to be an algebraic two-sided vector space, with central subfield $k$ such that $[F:k] = 2m$. The torsion sheaves on $\PP^{nc}(F)$ are well understood in this case, as it follows from \cite[Theorem 1]{ringel_representations_1976} that the category $\fl \Lambda_{00}$ of finite-length $\Lambda_{00}$-modules is a direct sum of uniserial categories of global dimension 1 with a unique simple object. As $\Lambda_{00}$ is a Dedekind domain every torsion module is a direct sum of cyclic modules, and hence each of these uniserial categories consists of self-extensions of the simple cyclic $\Lambda_{00}$-modules. We may now complete the first half of the dichotomy presented by Theorem 5.3.

\begin{theorem}
    If $\bimod{K_0}{F}{K_1}$ is algebraic, $\Lambda_{00}$ is finite over its center $Z(\Lambda_{00})$.
\end{theorem}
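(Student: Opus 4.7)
The plan is to exploit the associated-graded description $\mathrm{gr}_g(\Lambda_{00}) \cong \mathfrak{b}$ from Proposition 4.16: the assumption $|\sigma| = m < \infty$ will make $\mathfrak{b}$ module-finite over a central polynomial subring, and a standard filtration comparison then forces the same finiteness at the level of $\Lambda_{00}$.

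First, observe that the central subfield $k := K_0 \cap K_1$ lies in $Z(\Lambda_{00})$. By Theorem 3.12 every component of $A$ is a direct sum of bimodules $F_{\tau_{ij}^l}$ whose right action is twisted by an element of $H = \langle \tau_0, \tau_1 \rangle$; since $k = F^H$ is pointwise fixed by $H$, these twists leave $k$ untouched. Next, Proposition 4.16 gives the product $x \times_\mathfrak{b} y = x\,\sigma^i(y)$ on $\mathfrak{b}$, so the element $z^m := 1 \in \mathfrak{b}_m = F$ is $\mathfrak{b}$-central (using $\sigma^m = \mathrm{id}$), and a direct dimension count shows $\mathfrak{b}$ is free of rank $2m^2$ over the central polynomial subring $k[z^m]$.

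The main step is to construct a central lift $T \in \Lambda_{00}^m$ of $z^m$. The natural candidate is $T = \alpha\,(g_1 g_3 \cdots g_{2m-1})^{-1}$ for a suitable $\alpha$ in the top component $F_{\tau^{2m-1}} \subseteq A_{0,2m}$ of the Theorem 3.12 decomposition. Verifying centrality reduces to checking commutativity with $K_0$ and with $\Lambda_{00}^1$ (which together generate $\Lambda_{00}$, since a short normality computation yields $\Lambda_{00}^1 \cdot \Lambda_{00}^1 = \Lambda_{00}^2$, and so on); both checks hinge on iterating the normality relations of $g$ exactly $m$ times and invoking $\sigma^m = \mathrm{id}$ to cancel the accumulated twist. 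With $T$ in hand, pick lifts $e_1,\dots,e_N \in \Lambda_{00}$ of $k[z^m]$-module generators of $\mathfrak{b}$ and set $M := \sum_i k[T]e_i \subseteq \Lambda_{00}$. Then $\mathrm{gr}_g(M) \supseteq \mathfrak{b} = \mathrm{gr}_g(\Lambda_{00})$, so the standard comparison lemma for ascending exhaustive filtrations yields $M = \Lambda_{00}$. Hence $\Lambda_{00}$ is finitely generated over $k[T] \subseteq Z(\Lambda_{00})$, and a fortiori over its center.

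The main obstacle is the explicit construction of $T$: centrality in $\mathfrak{b}$ does not automatically lift, and the verification requires careful bookkeeping of the $m$-fold normality twist, which is precisely where the assumption $|\sigma| = m < \infty$ is used. Should the direct lift prove delicate, an alternative route is to use Ringel's classification (applied via the tilting equivalence of Chan--Nyman Section 7) to bound the $k$-dimension of simple $\Lambda_{00}$-modules uniformly, deduce $\Lambda_{00}$ is PI via Kaplansky's theorem, and then invoke standard structure results for affine prime PI algebras.
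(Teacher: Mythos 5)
Your opening observations — that $k=K_0\cap K_1$ lies in $Z(\Lambda_{00})$ because $H=\langle\tau_0,\tau_1\rangle$ fixes $k$, and that $\Lambda_{00}$ is generated by the filtration piece $\Lambda_{00}^1$ since $\Lambda_{00}^i\Lambda_{00}^1=\Lambda_{00}^{i+1}$ — are exactly the right starting point and match the paper. But from there the paper takes a much shorter road than either of your two routes: once one knows that $\Lambda_{00}$ is an affine $k$-algebra (which follows since $\dim_k\Lambda_{00}^1<\infty$ when $F$ is algebraic), that $\mathrm{GKdim}(\Lambda_{00})=1$ (Proposition~4.16), and that $\Lambda_{00}$ is a domain, the Small--Stafford--Warfield theorem \cite{small1985affine} immediately gives that $\Lambda_{00}/N(\Lambda_{00})=\Lambda_{00}$ is a finite module over its center. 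That single theorem absorbs all the work you are trying to do by hand.

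Your main route has a genuine gap that you yourself flag: you never actually construct the central element $T\in\Lambda_{00}$ lifting $z^m\in\mathfrak{b}_m$. Centrality in the associated graded ring of a filtered ring does not lift automatically, and the bookkeeping you describe (tracking the accumulated twist through $m$ applications of the normality relation) is exactly where the difficulty lives — it is not a routine verification, and nothing in your write-up shows it can be completed. Without $T$ the filtration-comparison argument has nothing to bootstrap from. (As a minor aside, your rank count ``$2m^2$'' for $\mathfrak{b}$ over $k[z^m]$ is off: $\dim_k\mathfrak{b}_0=m$ and $\dim_k\mathfrak{b}_i=2m$ for $0<i<m$ gives rank $2m^2-m$; this does not affect the logic but signals the bookkeeping is delicate.) Your ``plan B'' via Ringel's classification and Kaplansky's theorem would also work in principle, and is closer in spirit to the paper's argument since both pass through the PI property, but it routes through uniform dimension bounds on simple modules, which is strictly more information than needed; the Small--Stafford--Warfield theorem gets PI (and the finite-over-center conclusion) directly from $\mathrm{GKdim}=1$ plus affineness, which you have already established.
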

\begin{proof}
    The intersection of $K_0$ and $K_1$ in $F$ is fixed by both $\tau_0$ and $\tau_1$, so as $k \subset K_0 \cap_F K_1$ we have $k \subset Z(\Lambda_{00})$. Recall that the $g$-adic filtration on $\Lambda_{00}$ is given by $\Lambda_{00}^i = A_{0,2i}g^{-i}$. As $\dim_k K_0$ and $\dim_{K_0}A_{0,2i}$ are both finite, $\dim_k \Lambda_{00}^i$ is also finite. Then, as $\Lambda_{00}^i = (\Lambda_{00}^1)^i$, $\Lambda_{00}$ is finitely generated as a $k$-algebra by $\Lambda_{00}^1$. The Gelfand-Kirillov dimension of $\Lambda_{00}$ is $1$ by Proposition 4.17, so the Small-Stafford-Warfield theorem \cite{small1985affine} tells us that the quotient by the nilradical $\Lambda_{00} / N(\Lambda_{00})$ is a finite module over its center. The result then follows as $\Lambda_{00}$ is a domain, so $N(\Lambda_{00})$ is trivial.
\end{proof}

\subsection{$\Lambda_{00}$ for non-algebraic two-sided vector spaces}
We now suppose $\bimod{K_0}{F}{K_1}$ is a non-algebraic two-sided vector space. The group $H$ is always abstractly isomorphic to a dihedral group, as it is generated by two involutions. When $\bimod{K_0}{F}{K_1}$ is non-algebraic $H$ must be abstractly isomorphic to the infinite dihedral group $\mathbb{D}_{\infty}$, as the composition of the two generating involutions has infinite order via Corollary 5.5. For $i \in \{0,1\}$ we let $\sigma_i$ denote $\tau_i\tau_{1-i} \in H$, in which case we may express $H$ as a semidirect product $H \cong \langle \sigma_i \rangle \rtimes G_i$.

\begin{lemma}
    If $u \in H$ and $u(K_i) = K_j$ for some $i,j \in \{0,1\}$, then $i=j$ and $u \in G_i=\{1,\tau_i\}$.
\end{lemma}
\begin{proof}
    Suppose $u(K_i)=K_j$. Then we have 
    \[u(K_i) = u(F^{G_i}) = F^{uG_iu^{-1}} = F^{G_j}=K_j.\] 
    Then $\tau_i$ and $\tau_j$ are conjugate as $uG_iu^{-1} = \{ 1, u\tau_iu^{-1}\} = G_j$, and so we must have $i=j$ \cite[Chapter IV, \S1.3. Proposition 3]{bourbaki_lie_2002}. Thus $u \tau_i = \tau_iu$. Using the fact that $H \cong \langle \sigma_i \rangle \rtimes G_i$, we express $u$ as a product $u = \sigma_i^m\tau_i^r$ with $m \in \Z$ and $r \in \{0,1\}$. Then
    \[u\tau_i = \sigma_i^m\tau_i^{r+1} \]
    and
    \[ \tau_i u = \tau_i\sigma_i^m\tau_i^r=\sigma_i^{-m}\tau_i^{r+1}, \]
    so we must have $m=0$, as $\vert \sigma_i \vert = \infty$ by Corollary 5.5. It follows that $u \in \{1,\tau_i\} = G_i$.
\end{proof}

Given a normal family $x=\{x_i\}_{i \in \Z}$ in $A$, we again let $(x)$ denote the two-sided ideal generated by the $x_i$.

\begin{propn}
    If $x=\{x_i\}_{i \in \Z}$ is a normal family of degree $\delta$ in $A$, then $\delta$ is even and $(x) = (g^{\delta/2})$. 
\end{propn}
\begin{proof}
    Recall from Section 2.1 that for $d \in \Z$, $A[d]$ is the `shifted' $\Z$-indexed algebra defined by $A[d]_{ij} = A_{i+d,j+d}$ for all $i,j \in \Z$. As $A$ is a domain \cite[Theorem 6.2(2)]{chan_species_2016} each $x_i$ is a non-zero-divisor, and hence the normal family $x$ determines a `conjugation' automorphism $\varphi_x:A \to A[\delta]$ where, for $a \in A_{ij}$, $\varphi_x(a)\in A_{i+\delta,j+\delta}$ is such that $ax_j = x_i \varphi_x(a)$. This means $\varphi_x:K_i=A_{ii} \to A_{i+\delta,i+\delta}=K_{i+\delta}$ is an isomorphism, and so $\delta$ must be even by Lemma 5.7. 
    
    Using Theorem 3.12 we decompose each $x_i$ as a sum 
    \[x_i = \displaystyle\sum_{m=0}^{\delta/2} x_{i,m} \in K_i \oplus  F_{\tau_{i,i+2}} \oplus F_{\tau_{i,i+4}} \oplus \ldots \oplus F_{\tau_{i,i+\delta}}, \]
    where $\tau_{i,i+\delta} = (\tau_{i+1},\tau_{i+2},\ldots,\tau_{i+\delta-1}) \in G_{i,i+\delta}$. Note that as $H \cong \mathbb{D}_{\infty}$, $\overline{\tau_{ij}}(K_{i}) \neq \overline{\tau_{i,j+2l}}(K_{i})$ unless $l=0$. For $b \in K_i$, there is some $c=\varphi_x(b) \in K_{i+\delta}$ such that $bx_i=x_ic$. But
    \[ x_ic = \sum_{m=0}^{\delta/2} x_{i,m} \cdot \overline{\tau_{i,i+2m}}(c) = \sum_{m=0}^{\delta/2} b\cdot x_{i,m}, \]
    where $\cdot$ is the usual $F$-multiplication, so $x_{i,m}$ is non-zero for at most one $m$. Fix $0\leq l \leq \delta/2$ such that $x_i=x_{i,l}$, in which case $c = \overline{\tau_{i,i+2l}}^{-1}(b)$. Then $\overline{\tau_{i,i+2l}}^{-1}$ defines an isomorphism $K_i \xrightarrow{\sim} K_{i+\delta}$, and so we must have $l=0$ by Lemma 5.7. Thus $x_i=x_{i,0} \in K_i \subset A_{i,i+\delta}$, and this 1-dimensional $K_i$-vector space contains $g_i^{\delta / 2}=g_{i}g_{i+2}\ldots g_{i+\delta-2}$, so $x_i = ag_i^{\delta / 2}$ for some $a \in K_i$. Thus $(x) = (g^{\delta/2})$.
\end{proof}

With this, we can complete the proof of Theorem 5.3.

\begin{theorem}
    If $\bimod{K_0}{F}{K_1}$ is a simple, non-algebraic two-sided vector space, $\Lambda_{00}$ is a simple ring.
\end{theorem}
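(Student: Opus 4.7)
The plan is to argue by contradiction, using the ideal-theoretic machinery developed in Section 4 together with the scarcity of normal families established in Proposition 5.7. Suppose $I \subsetneq \Lambda_{00}$ is a nonzero two-sided ideal, and set $\widetilde{I} := \Lambda I \Lambda$ and $\overline{I} := \widetilde{I} \cap A$ as in Section 4.2. Since $\Lambda$ is strongly graded \cite[Lemma 10.13]{chan_species_2016}, the assignment $I \mapsto \widetilde{I}$ is a bijection between the two-sided ideals of $\Lambda_{00}$ and those of $\Lambda$ (the inverse being $\widetilde{I} \mapsto e_0 \widetilde{I} e_0$), and in particular $\widetilde{I}$ is nonzero and proper. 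Lemma 4.7 then identifies $\widetilde{I} = \overline{I}[g^{-1}]$ and shows $\overline{I}$ is $g$-saturated; combined with the $g$-torsion-freeness of $A / \overline{I}$ from Lemma 4.8, this forces $\overline{I}$ to be a nonzero proper two-sided ideal of $A$.

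Next, I would apply Corollary 4.10 to produce a normal family $(a_i \in A_{i, i+\delta})_{i \in \Z}$ generating $\overline{I}$, and invoke the just-proved Proposition 5.7 to conclude that this family is, up to nonzero scalars in the $K_i$, a power of $g$: the degree $\delta$ is even, and $a_i = c_i\, g_i g_{i+2} \cdots g_{i+\delta-2}$ with $c_i \in K_i^{\times}$. If $\delta = 0$, each $a_i \in K_i^{\times}$ is already a unit in $e_i A e_i$, so $a_i A = e_i A$ for every $i$ and hence $\overline{I} = A$ --- contradicting properness. If $\delta > 0$, each $a_i$ becomes invertible in $\Lambda$ (since every $g_{i+2k}$ is and $c_i$ is a unit), whence every local unit $e_i = a_i\, a_i^{-1}$ lies in $\widetilde{I}$. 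But then for any $x \in \Lambda_{jl}$ we have $x = e_j x \in \widetilde{I}$, so $\widetilde{I} = \Lambda$ --- again contradicting properness. Thus no nonzero proper ideal $I$ exists, and $\Lambda_{00}$ is simple.

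The main obstacle has in effect already been surmounted: it is Proposition 5.7, whose proof uses the infinite order of $\sigma = \tau_1 \tau_0$ --- equivalently, the absence of a proper subfield of $F$ fixed by any nonzero power of $\sigma$ --- to kill all off-diagonal components in the bimodule decomposition of Theorem 3.12 of a putative normal family. Once that classification is in hand, the deduction of simplicity amounts to standard bookkeeping about lifting, contracting, and inverting a normal element in a $\Z$-indexed Ore localisation.
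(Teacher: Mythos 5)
Your proof is correct and follows essentially the same route as the paper: reduce to $\overline{I}$, apply Corollary~4.10 to get a normal family of generators, and invoke Proposition~5.7 to conclude the family is (a unit multiple of) a power of $g$, which forces $\widetilde{I} = \Lambda$. The paper states this more tersely, leaving implicit the bookkeeping you spell out — the bijection between ideals of $\Lambda_{00}$ and of $\Lambda$ via strong grading, and the final inversion step showing $\overline{I}$ generated by a power of $g$ localises to all of $\Lambda$ — so your version is a faithful expansion of the same argument rather than a genuinely different one.
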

\begin{proof}
    By Corollary 4.11, any two-sided ideal $I\neq 0$ of $\Lambda_{00}$ gives a two-sided ideal $\overline{I}$ of $A$ generated by a family of normal elements. The previous proposition shows that $\overline{I} = (g^n)$ for some $n\geq 1$, and so we must have that $I = \overline{I}[g^{-1}]_{00} = \Lambda_{00}$ by Lemma 4.8.
\end{proof}

We remark that when $\bimod{K_0}{F}{K_1}$ is non-algebraic, $\Lambda_{00}$ appears to be a new example of a simple noncommutative Dedekind domain. It is stated shortly after Theorem 1.1 of \cite{smertnig2017every} that simple noncommutative Dedekind domains are obtained as skew polynomial and Laurent rings, of which it seems $\Lambda_{00}$ is neither. Indeed, $\Lambda_{00}$ is not generated over $K_0$ by a single element nor by units, and hence $\Lambda_{00}$ is not isomorphic to $K_0[t;\psi,\delta]$ nor $K_0[t,t^{-1};\psi]$ for any $K_0$-automorphism $\psi$ and $\psi$-derivation $\delta$.


\end{document}